\theoremstyle{plain}
\renewcommand\thefigure{\thesection.\@arabic\c@figure}
\renewcommand\thetable{\thesection.\@arabic\c@table}
\newtheorem{thm}{\bf Theorem}[section]
\newtheorem{cor}{\bf Corollary}[section]
\newtheorem{prop}{Proposition}[section]
\newtheorem{lmm}{\bf Lemma}[section]
\newenvironment{lemma}{\begin{lmm}}{\end{lmm}}
\theoremstyle{remark}
\theoremstyle{definition}
\newtheorem{defn}{Definition}[section]
\newtheorem{exm}{\bf Example} %%%xinjia
\newcommand{\bs}[1]{\boldsymbol{#1}}
\newcommand \dint {\displaystyle \int}
\newcommand \R {\mathbb{R}}
\begin{document}

\title[An efficient method for fractional reaction-diffusion equations]{An efficient spectral-Galerkin method for fractional reaction-diffusion equations in unbounded domains}
\author[H. Yuan]{ Huifang Yuan}
\thanks{SUSTech  International Center for Mathematics,  Southern University of Science and Technology, Shenzhen, 518055, China, and School of Mathematics and Statistics, Wuhan University, Wuhan, 430072, China.  Email:  yuanhf@sustech.edu.cn (H. Yuan).}

%\begin{AMS}
% 65N35, 65M70, 41A05, 41A25.
%\end{AMS}
%\subjclass[2000]{65N35, 65M70, 41A05, 41A25.}	

\subjclass[2000]{65N35, 65M70, 41A05, 41A25.}	
\keywords{Spectral-Galerkin method, mapped Chebyshev functions, biorthogonal, fractional reaction-diffusion equations, unbounded domain.}

 \begin{abstract}
 In this work, we apply a fast and accurate numerical method for solving fractional reaction-diffusion equations in unbounded domains. By using the Fourier-like spectral approach in space, this method can effectively handle the fractional Laplace operator, leading to a fully diagonal representation of the fractional Laplacian.  To fully discretize the underlying nonlinear reaction-diffusion systems, we propose to use an accurate time marching scheme based on ETDRK4.  Numerical examples are presented to illustrate the effectiveness of the proposed method.
\end{abstract}

\maketitle
%\begin{keywords} Fractional Laplacian,   Gegenbauer polynomials, modified rational functions, unbounded domains, Fourier transforms, spectral methods.
%\end{keywords}

\vspace*{-10pt}
\section{Introduction}
Progress in the last few decades has shown that many complex systems in science, economics and engineering are found to be more accurately described by fractional differential equations, see \cite{bouchaud1990anomalous,cartea2007fluid,metzler2000random,west1994fractal}. Usually, in these equations fractional derivatives are involved to deal with long range time dependence or long range spatial interactions. Among them, typical fractional operators are Reimann-Liouville fractional derivatives, Caputo fractional derivatives, Riesz fractional derivatives, fractional Laplacian and so on. Since analytic solutions to fractional differential equations are usually unknown, there has been a growing interest in developing effective and efficient numerical methods, see \cite{alikhanov2015new,bonito2019numerical,chen2016generalized,deng2009finite,lin2007finite,tian2015class,zayernouri2013fractional} and the references therein.

In this paper, we are going to deal with the fractional reaction-diffusion equations in unbounded domains where the second order space derivative is replaced with fractional Laplacian. To better illustrate our idea, we consider the following model equation
\begin{equation}\label{frde}
\begin{cases}
&\frac{\partial u}{\partial t}=-D(-\Delta)^{s}u+f(u), \quad t\in(0,T],\;x\in\mathbb{R}^{d},\\
&u(x,t)\to 0, \qquad\qquad\qquad\;\;\,|x|\to\infty,\\
&u(x,0)=u_{0}(x),\qquad\qquad\quad x\in\mathbb{R}^{d},
\end{cases}
\end{equation}
for $s\in (0,1.5)$, where $D>0$ is the diffusion coefficient and $f(u)$ is the reaction term. Reaction-diffusion equations model the change in space and time of the concentration of substances under the influence reaction and diffusion, and have found wide applications in biology, physics, chemistry and engineering, see, for example, \cite{cantrell2004spatial,klein2006pollen,pouchol2019phase}. Examples of particular interest include the Fisher-Kolmogorov equation when $f(u)=ru(1-\frac{u}{K})$, see \cite{owolabi2016solution,zhu2017numerical}, and the Allen-Cahn equation when $f(u)=u-u^3$, see \cite{hou2017numerical,liao2020energy,tang2016implicit}. For numerical methods, readers may see \cite{baeumer2008numerical,owolabi2018efficient,pindza2016fourier} and the references therein. For integer order reaction-diffusion equation, the use of integer order derivatives lies in the assumption that the random motion is stochastic Gaussian process. However, more and more studies has shown that certain systems exhibit anomalous diffusion, see, e.g., \cite{del2003front,seki2003fractional}, and the references therein. In this paper, we study the case when the second-order space derivative is replaced with the fractional Laplace operator, which can account for anomalous diffusion phenomenon by allowing arbitrary jump length distribution, including the case with infinite variance. For time-fractional reaction-diffusion models, readers may refer to \cite{atangana2016new,gafiychuk2008mathematical,henry2006anomalous,seki2003fractional}.

Fractional Laplacian can be defined as a pseudo-differential operator via  the Fourier transform for $s>0$:
\begin{equation}\label{Ftransform}
\begin{split}
%&{\mathscr F}[(-\Delta)^s v](\xi)=|\xi|^{2s}  {\mathscr F}[v](\xi),\qquad
 (-\Delta)^s u(x):={\mathscr F}^{-1}\big[|\xi|^{2s}  {\mathscr F}[u](\xi)\big](x), \quad  \forall\, x \in {\mathbb R}^d.
\end{split}
\end{equation}
Or equivalently, it can be defined for $s\in(0,1)$ by singular representation (cf. \cite[Prop. 3.3]{Nezza2012BSM}):
%for any $u\in  \mathscr{S}(\mathbb{R}^d)$ and  $s\in (0,1),$
\begin{equation}\label{fracLap-defn}
(-\Delta)^s u(x)=C_{d,s}\, {\rm p.v.}\! \int_{\mathbb R^d} \frac{u(x)-u(y)}{|x-y|^{d+2s}}\, {\rm d}y,\quad x\in {\mathbb R}^d,
\end{equation}
where ``p.v." stands for the principle value and the normalisation constant is given as
\begin{equation}\label{Cds}
C_{d,s}:=\Big(\int_{\mathbb R^d} \frac{1-\cos \xi_1}{|\xi|^{d+2s}}\,{\rm d}\xi\Big)^{-1}=
\frac{2^{2s}s\Gamma(s+d/2)}{\pi^{d/2}\Gamma(1-s)}.
\end{equation}
Various numerical treatment of the fractional Laplace operator has been proposed, see, e.g., finite difference method\cite{duo2018novel,duo2019accurate,minden2020simple,tian2015class}, finite element method\cite{acosta2017short,bonito2019numerical,chen2018multigrid}, and spectral method \cite{mao2017hermite,tang2020rational,tang2018hermite}. Recently, in \cite{sheng2020fast}, biorthogonal mapped Chebyshev functions are proposed as the basis for fractional differential equations in unbounded domain, which then lead to diagonal stiffness matrix by using the Dunford-Taylor formula. This provides an efficient tool for numerical approximations of fractional reaction-diffusion equations, especially in multidimensional problems. In this paper, we will follow the routines in \cite{sheng2020fast} to deal with the fractional Laplace operator.

The aim of this work is to propose a reliable and efficient approach to solve the fractional reaction-diffusion equation in unbounded domains involving the fractional Laplace operator. In order to obtain higher accuracy, we adapt the fourth-order exponential time differencing Runge-Kutta (ETDRK4) method for time stepping, which is based on the exact integration of the linear part and the approximation of an integral involving the nonlinear part.

The main contributions of this work are three folds:
\begin{itemize}
\item[(1)] Since fractional derivatives are usually nonlocal, here we study the unbounded domain directly, different from other previous works where domain truncation is usually used.
\item[(2)] The biorthogonal mapped Chebyshev functions are used as the basis for our spectral-Galerkin method, which lead to diagonal stiffness matrix, and thus very efficient even in higher dimensions.
\item[(3)] The ETDRK4 time stepping method is used for the resulted semidiscrete problem which is 4th-order accurate in time. This ensures that numerical treatment of the fractional reaction-diffusion equation is both more efficient and accurate.
\end{itemize}

The rest of this paper is organized as follows. In the next section, we briefly give some preliminaries on the properties of biorthogonal mapped Chebyshev functions and the Dunford-Taylor formula for $s\in(0,2)$. The numerical approach based on spectral-Galerkin method and ETDRK4 time stepping method for fractional reaction-diffusion equation is developed in section 3, where the linear stability of ETDRK4 is also analysed. In section 4, various numerical examples are provided to illustrate the effectiveness of proposed method, including the Fisher-Kolmogorov equation, Allen-Cahn equation, Gray-Scott equation and FitzHugh-Nagumo equation. The final section is for some concluding remarks.

%bounded domain: \cite{pindza2016fourier,zhu2017numerical}

\section{Preliminary}

The mapped Chebyshev functions (MCFs) based on the Fourier-like biorthogonal property was proposed and studied recently in \cite{sheng2020fast}. The biorthogonal property enables us to make the fractional Laplacian fully diagonalised, and the
complexity of solving an elliptic fractional PDE is quasi-optimal, i.e., $O((N \log2 N)^
d)$ with $N$ being the
number of modes in each spatial direction. The present section is to provide some basic preliminaries
useful for the efficient spectral algorithms to be provided
in the forthcoming section.

%\subsection{Fractional spaces}
\subsection{Fourier-like mapped Chebyshev functions}
Let  $T_n(y)=\cos(n\, {\rm arccos}(y)),$ $y\in \Lambda:=(-1,1)$  be the Chebyshev polynomial of degree $n$. The mapped Chebyshev functions (MCFs) are defined as in \cite{ben2002modified,shen2011spectral,shen2014approximations}.
\begin{defn}\label{defnMCF} {\em
Introduce the one-to-one algebraic mapping % between $\Lambda$ and $\mathbb R$
%To transfer the integral intervals $\Lambda$ to $\R$, we make following algebraic mapping
\begin{equation}\label{AlgeMapp}
x=\frac{y}{\sqrt{1-y^2}},\quad y=\frac{x}{\sqrt{1+x^2}},\quad x\in {\mathbb R},\;\; y\in \Lambda,
\end{equation}
and  define the MCFs as
 \begin{equation}\label{defiOMCF}
\mathbb{T}_n(x)= \frac{1} {\sqrt{c_n\pi/2}} \sqrt{1-y^2}\, T_n(y)= \frac{1}{\sqrt{c_n\pi/2}} \frac 1{\sqrt{1+x^2}}\, T_n\Big(\frac x {\sqrt{1+x^2}}\Big),
%\frac{\mu(y(x))}{\sqrt{c_n\pi/2}}T_n(y(x)).%\qquad {\rm with}\qquad\mu(y)=\sqrt{1-y^2}.
\end{equation}
for $x\in \mathbb R$ and integer $n\ge 0.$}
\end{defn}
We have the following important property of the MCFs with respect to the mass matrix and stiffness matrix, see \cite[Proposition 2.4]{shen2014approximations}.
\begin{prop}\label{orthMCF} The MCFs are orthonormal in $L^2(\mathbb R),$
%\begin{equation}\label{OMCFsorth0}
%  \dint_{\R}\mathbb {T}_n(x)\,\mathbb{T}_m(x)\, {\rm d}x=\delta_{nm}.
%\end{equation}
and we have for the stiffness matrix
\begin{equation}\label{derivOrth}
\begin{split}
&{S}_{mn}={S}_{nm}= \dint_{\R}\mathbb{T}'_n(x)\,\mathbb{T}'_m(x)\,{\rm d}x\\
&=\begin{cases}
\displaystyle \frac{1}{c_n}\Big(\frac{(4c_{n-1}-c_{n-2})(n-1)^2}{16}+\frac{(4c_{n+1}-c_{n+2})(n+1)^2}{16}-\frac{c_{n}}{4}\Big), & {\rm if}\;\; m=n, \\[6pt]
\displaystyle \frac{1}{\sqrt{c_nc_{n+2}}}\Big(\frac{(c_{n}-c_{n+2})(n+1)}{8}-\frac{c_{n+1}(n+1)^2}{4}\Big), & {\rm if}\;\; m=n+2,\\[6pt]
\displaystyle \frac{1}{\sqrt{c_nc_{n+4}}}\Big(\frac{c_{n+2}(n+1)(n+3)}{16}\Big),  & {\rm if}\;\; m=n+ 4,\\
\displaystyle 0, &\text{otherwise}.
%\frac{1}{\sqrt{c_nc_{n-2}}}\left(\frac{(c_{n-2}-c_n)(n-1)}{8}-\frac{c_{n-1}(n-1)^2}{4}\right), \quad & {\rm if}\;\; m=n-2,\\
%\frac{1}{\sqrt{c_nc_{n-4}}}\left(\frac{c_{n-2}(n-1)(n-3)}{16}\right), \quad & {\rm if}\;\; m=n-4.
\end{cases}
\end{split}
\end{equation}
\end{prop}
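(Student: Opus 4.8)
The plan is to reduce both assertions to classical facts about Chebyshev polynomials through the algebraic map \re{AlgeMapp}, most conveniently handled by the trigonometric substitution $y=\cos\theta$, i.e. $x=\cot\theta$ with $\theta\in(0,\pi)$. First I would establish orthonormality. Differentiating \re{AlgeMapp} gives ${\rm d}x=(1-y^2)^{-3/2}\,{\rm d}y$, and since $1-y^2=(1+x^2)^{-1}$ the weight $(1-y^2)$ produced by the two factors $\sqrt{1-y^2}$ in \re{defiOMCF} cancels the Jacobian up to $(1-y^2)^{-1/2}$. Thus
\begin{equation*}
\dint_{\R}\mathbb{T}_n(x)\,\mathbb{T}_m(x)\,{\rm d}x=\frac{1}{(\pi/2)\sqrt{c_nc_m}}\int_{-1}^{1}\frac{T_n(y)\,T_m(y)}{\sqrt{1-y^2}}\,{\rm d}y=\delta_{mn},
\end{equation*}
by the standard orthogonality of the $T_n$ with weight $(1-y^2)^{-1/2}$ and the choice of the constants $c_n$. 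The symmetry $S_{mn}=S_{nm}$ is immediate from the definition.

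For the stiffness matrix the key preliminary step is a closed form for $\mathbb{T}'_n$. Writing $y=\cos\theta$ one checks ${\rm d}y/{\rm d}x=(1-y^2)^{3/2}=\sin^3\theta$, so the chain rule applied to \re{defiOMCF}, after collecting the factors of $\sqrt{1-y^2}=\sin\theta$, yields
\begin{equation*}
\mathbb{T}'_n(x)=\frac{1}{\sqrt{c_n\pi/2}}\,(1-y^2)\big[(1-y^2)T'_n(y)-y\,T_n(y)\big].
\end{equation*}
The bracket is the crucial quantity. Using $T_n(\cos\theta)=\cos(n\theta)$ and $T'_n(\cos\theta)=n\sin(n\theta)/\sin\theta$, a product-to-sum computation collapses it to
\begin{equation*}
(1-y^2)T'_n(y)-y\,T_n(y)=\tfrac12\big[(n-1)T_{n-1}(y)-(n+1)T_{n+1}(y)\big],
\end{equation*}
so that the derivative of a single MCF maps onto a two-term combination of Chebyshev polynomials with neighbouring indices $n\pm1$. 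This is the structural fact responsible for the banded form of $S$.

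With this in hand I would substitute $y=\cos\theta$ in $S_{mn}=\dint_{\R}\mathbb{T}'_n\,\mathbb{T}'_m\,{\rm d}x$. The Jacobian ${\rm d}x=-\csc^2\theta\,{\rm d}\theta$ cancels one of the two factors $(1-y^2)^2=\sin^4\theta$ produced by the brackets, leaving
\begin{equation*}
S_{mn}=\frac{1}{(\pi/2)\sqrt{c_nc_m}}\int_0^\pi \sin^2\theta\,A_n(\theta)\,A_m(\theta)\,{\rm d}\theta,
\end{equation*}
where $A_n(\theta)=\tfrac12\big[(n+1)\cos((n+1)\theta)-(n-1)\cos((n-1)\theta)\big]$ is the image of the bracket above. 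Expanding the product $A_nA_m$ and writing $\sin^2\theta=\tfrac12(1-\cos2\theta)$ reduces everything to the elementary integrals $\int_0^\pi\cos(p\theta)\cos(q\theta)\,{\rm d}\theta$, which vanish unless $p=q$, together with the cross terms carrying the factor $\cos2\theta$, which survive only when $|p-q|=2$. Since the frequencies occurring in $A_n$ and $A_m$ are $n\pm1$ and $m\pm1$, a nonzero contribution forces $|n-m|\in\{0,2,4\}$, precisely the four-case structure of \re{derivOrth}.

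The one genuinely delicate point is the bookkeeping in this last step: one must track all admissible pairings of the frequencies $\{n-1,n+1\}$ with $\{m-1,m+1\}$, keep careful account of the normalization constants $c_{n-1},c_n,\dots,c_{n+4}$ (in particular the doubling at index $0$), and treat the small-index cases where $n-1\le 0$ separately, before the contributions organize into the stated coefficients for $m=n$, $m=n+2$ and $m=n+4$. No new idea is needed there, only patient collection of terms, so I expect the main obstacle to be purely combinatorial rather than conceptual.
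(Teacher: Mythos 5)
Your proposal is correct, and it supplies an actual derivation where the paper gives none: the paper states this proposition with only a pointer to \cite[Proposition 2.4]{shen2014approximations}, so there is no in-text proof to compare against. Your route --- pulling everything back to $(-1,1)$ via $y=x/\sqrt{1+x^2}$ with ${\rm d}x=(1-y^2)^{-3/2}{\rm d}y$, and collapsing the bracket $(1-y^2)T_n'(y)-y\,T_n(y)$ to $\tfrac12\big[(n-1)T_{n-1}(y)-(n+1)T_{n+1}(y)\big]$ --- is precisely the mechanism behind the pentadiagonal structure, and it does reproduce the stated entries in the generic index range: the diagonal integral evaluates to $\tfrac{\pi}{16}(3n^2+1)$, giving $S_{nn}=(3n^2+1)/8$ for $n\ge 3$, while the off-diagonal pairings give $S_{n,n+2}=-(n+1)^2/4$ and $S_{n,n+4}=(n+1)(n+3)/16$, all in agreement with \eqref{derivOrth}. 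Two small points to tidy up. First, your $A_n(\theta)$ is the negative of the bracket you just derived; this is harmless because $A_n$ enters $S_{mn}$ bilinearly with a matching sign flip in $A_m$, but the inconsistency should be removed. Second, the constants $c_n$ are never defined in this paper: your argument needs $c_0=2$ and $c_n=1$ for $n\ge1$ both for the orthonormality claim and for the low-index entries of \eqref{derivOrth}, together with the convention $T_{-n}=T_n$ so that the $n=0,1$ cases of the two-term derivative identity make sense; these conventions belong up front, before the ``bookkeeping'' step you defer, since the explicit appearance of $c_{n-1}$ and $c_{n-2}$ in the diagonal entry is exactly where they matter.
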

The Fourier-like biorthogonal  MCFs $\{\widehat{\mathbb T}_n\}^N_{p=0}$ are defined as a linear combination of the mapped Chebyshev functions, where the expansion coefficients are the matrix formed by the orthonormal eigenvectors of $\bs{S}$. Readers may refer to \cite{sheng2020fast} for more details. Then they satisfy the following property:
\begin{lemma}\label{partial-diagonal}
$\{\widehat{\mathbb T}_n\}^N_{n=0}$  form an equivalent basis with $\{{\mathbb T}_n\}^N_{p=0}$, denoted as $\mathbb{V}_{\!N},$  and they are biorthogonal in the sense that
\begin{equation}\label{bi-orth-eqn}
(\widehat{\mathbb T}_m,\widehat{\mathbb T}_n)_{L^2(\mathbb{R})}=\delta_{mn},\quad  \big(\,\widehat{\mathbb T}_m', \widehat{\mathbb T}_n'\,\big)_{L^2(\mathbb{R})}=\lambda_n\delta_{mn},
\quad 0\le m,n\le N,
\end{equation}
where $\delta_{mn}$ is the Kronecker symbol and $\lambda_{n}$,\;$n=0,1,\cdots,N$, are the eigenvalues of $\bs{S}$.
%for $0\le p,q\le N.$
\end{lemma}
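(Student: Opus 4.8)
The plan is to exploit the two structural facts already established: by Proposition~\ref{orthMCF} the MCFs $\{\mathbb{T}_n\}_{n=0}^N$ are orthonormal in $L^2(\R)$, so their mass matrix is the identity, while their stiffness matrix $\bs{S}=(S_{mn})$ with $S_{mn}=\dint_{\R}\mathbb{T}'_n\,\mathbb{T}'_m\,\rd x$ is real and symmetric. Because $\bs{S}$ is real symmetric, the spectral theorem furnishes an orthogonal matrix $\bs{E}=(E_{kn})$ whose columns are orthonormal eigenvectors of $\bs{S}$, so that $\bs{E}^{\top}\bs{E}=\bs{I}$ and $\bs{E}^{\top}\bs{S}\,\bs{E}=\mathrm{diag}(\lbd_0,\dots,\lbd_N)$. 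In accordance with the construction named in the statement, I would set $\widehat{\mathbb T}_n=\dsum_{k=0}^{N}E_{kn}\,\mathbb{T}_k$ and then verify the claimed properties directly.

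First I would settle the equivalence of bases. Since $\bs{E}$ is orthogonal it is invertible, hence the linear map carrying $\{\mathbb{T}_n\}_{n=0}^N$ to $\{\widehat{\mathbb T}_n\}_{n=0}^N$ is a bijection of the $(N+1)$-dimensional space onto itself. Consequently $\{\widehat{\mathbb T}_n\}_{n=0}^N$ spans the same space $\mathbb{V}_{\!N}$ and forms an equivalent basis, which is the first assertion.

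Next I would verify the two biorthogonality relations by expansion. For the mass relation, inserting the definition into the $L^2$ inner product and using $(\mathbb{T}_k,\mathbb{T}_l)_{L^2(\R)}=\delta_{kl}$ collapses the double sum to $\dsum_{k}E_{km}E_{kn}=(\bs{E}^{\top}\bs{E})_{mn}=\delta_{mn}$. For the stiffness relation the same expansion yields $\dsum_{k,l}E_{km}E_{ln}(\mathbb{T}'_k,\mathbb{T}'_l)_{L^2(\R)}=\dsum_{k,l}E_{km}\,S_{kl}\,E_{ln}=(\bs{E}^{\top}\bs{S}\,\bs{E})_{mn}=\lbd_n\delta_{mn}$, where I use $S_{kl}=(\mathbb{T}'_k,\mathbb{T}'_l)_{L^2(\R)}$, valid by the symmetry of $\bs{S}$.

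The content here is linear-algebraic rather than analytic, so there is no serious obstacle. The only points demanding care are bookkeeping: ensuring the index placement in the definition of $\widehat{\mathbb T}_n$ matches the transpose so that $\bs{E}^{\top}\bs{S}\,\bs{E}$ (and not $\bs{E}\,\bs{S}\,\bs{E}^{\top}$) emerges, and invoking the symmetry of $\bs{S}$ so that the spectral theorem delivers a genuinely \emph{orthonormal} eigenvector matrix $\bs{E}$ rather than merely a diagonalizing one. Both facts are guaranteed by Proposition~\ref{orthMCF}.
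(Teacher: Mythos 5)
Your proof is correct and is exactly the standard spectral-theorem argument that underlies this lemma; the paper itself states the result without proof, deferring to the cited reference, and your verification (orthogonality of $\bs{E}$ giving the mass relation, $\bs{E}^{\top}\bs{S}\bs{E}=\mathrm{diag}(\lambda_0,\dots,\lambda_N)$ giving the stiffness relation, invertibility of $\bs{E}$ giving equivalence of bases) fills that in faithfully. No gaps.
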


For $d-$dimensional sobolev space, define % the approximation space
\begin{equation}\label{VnVn}
{\mathbb V}_{\!N}^d=\mathbb V_{\!N}\otimes\cdots \otimes \mathbb V_{\!N},
\end{equation}
which is the tensor product of $d$ copies of  $\mathbb V_{\!N}$. Define the $d$-dimensional tensorial Fourier-like basis and denote the vector of the corresponding eigenvalues in \eqref{bi-orth-eqn} by
\begin{equation}\label{mathTT}
 \widehat {\mathbb T}_n(x)=\displaystyle\prod^d_{j=1} \widehat {\mathbb T}_{n_j}(x_j), \quad x\in\mathbb{R}^d;\quad
\lambda_n=(\lambda_{n_1},\cdots,\lambda_{n_d})^t .
\end{equation}
Accordingly,  we have %can define the $d$-dimensional approximate space
\begin{equation}
\label{apprspacRd}
\mathbb{V}_{\!N}^d={\rm span}\big\{ \widehat {\mathbb T}_n(x),~n\in\Upsilon_{\!N}\big\},
\end{equation}
where the index set
\begin{equation}\label{UpsilonA}
  \Upsilon_{\!N}:=\big\{n=(n_1,\cdots, n_d)\,:\, 0\le n_j\le N,\; 1\le j\le d\big\}.
\end{equation}

As an extension of Lemma \ref{partial-diagonal}, the following attractive property of the tensorial Fourier-like MCFs is obtained.
\begin{lmm}  For the tensorial Fourier-like MCFs defined in \eqref{mathTT}, we have
\begin{equation}\label{pqN}
( \widehat {\mathbb T}_m,  \widehat {\mathbb T}_n)_{L^2(\mathbb{R}^d)}=\delta_{mn}\,;  \quad
\big(\nabla \widehat {\mathbb T}_m,  \nabla \widehat {\mathbb T}_n\big)_{L^2(\mathbb{R}^d)}=|\lambda_n|_1\,\delta_{mn},
\end{equation}
where $|\cdot|_{1}$ denotes the $L^{1}$ norm.
\end{lmm}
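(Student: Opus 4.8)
The plan is to exploit the tensor-product structure of the basis in \eqref{mathTT} and reduce both identities to the one-dimensional biorthogonality of Lemma \ref{partial-diagonal}, combined with Fubini's theorem. Throughout, $m=(m_1,\dots,m_d)$ and $n=(n_1,\dots,n_d)$ are multi-indices in $\Upsilon_{\!N}$, and I read $\delta_{mn}$ as the product $\prod_{j=1}^d \delta_{m_j n_j}$, which is $1$ precisely when $m=n$.

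For the mass matrix I would write the $L^2(\mathbb{R}^d)$ inner product as the $d$-fold integral of $\prod_{j=1}^d \widehat{\mathbb T}_{m_j}(x_j)\,\widehat{\mathbb T}_{n_j}(x_j)$. Since each MCF lies in $L^2(\mathbb R)$, the integrand is absolutely integrable, so Fubini's theorem factors it into $\prod_{j=1}^d \int_{\mathbb R} \widehat{\mathbb T}_{m_j}(x_j)\,\widehat{\mathbb T}_{n_j}(x_j)\,{\rm d}x_j$. Each one-dimensional factor equals $\delta_{m_j n_j}$ by the first identity in \eqref{bi-orth-eqn}, and the product gives $\delta_{mn}$, as claimed. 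For the stiffness term, the key step is to expand the gradient dot product. Because $\partial_{x_k}\widehat{\mathbb T}_m = \widehat{\mathbb T}'_{m_k}(x_k)\prod_{j\neq k}\widehat{\mathbb T}_{m_j}(x_j)$, the integrand $\nabla\widehat{\mathbb T}_m\cdot\nabla\widehat{\mathbb T}_n$ is a sum over $k=1,\dots,d$ of terms in which only the $k$-th variable carries the derivatives while all the other variables carry undifferentiated factors. Applying Fubini again, the $k$-th summand integrates to $\lambda_{n_k}\,\delta_{m_k n_k}\prod_{j\neq k}\delta_{m_j n_j}$, using the second identity in \eqref{bi-orth-eqn} in the differentiated direction and the first in the remaining $d-1$ directions.

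The essential observation is that the factor $\delta_{m_k n_k}$ coming from the differentiated direction combines with $\prod_{j\neq k}\delta_{m_j n_j}$ to reconstitute the full $\delta_{mn}$, so the $k$-th summand is simply $\lambda_{n_k}\,\delta_{mn}$. Summing over $k$ then yields $\big(\sum_{k=1}^d \lambda_{n_k}\big)\,\delta_{mn}$. Finally I would identify $\sum_{k=1}^d \lambda_{n_k}$ with $|\lambda_n|_1$: since $\bs{S}$ is a symmetric stiffness matrix its eigenvalues are nonnegative, hence the $|\cdot|_1$ norm of $\lambda_n=(\lambda_{n_1},\dots,\lambda_{n_d})^t$ equals $\sum_{k=1}^d \lambda_{n_k}$, giving the stated result. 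The argument is essentially bookkeeping, and there is no serious obstacle; the only point requiring a little care is the multi-index notation and the collapse in the stiffness computation, where each of the $d$ summands must be seen to reduce to the same $\delta_{mn}$ with a different eigenvalue prefactor.
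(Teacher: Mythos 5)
Your argument is correct: the paper states this lemma without proof (as a direct extension of Lemma \ref{partial-diagonal}), and your tensor-product/Fubini factorization, with the sum over the differentiated direction collapsing to $\sum_k \lambda_{n_k}\,\delta_{mn}=|\lambda_n|_1\,\delta_{mn}$, is exactly the standard computation the paper leaves implicit. Nothing is missing.
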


\subsection{Fractional Sobolev space}
For real $s\ge 0,$ define the fractional Sobolev space (cf. \cite[P. 530]{Nezza2012BSM}):
%(as in\cite[P. 30]{lions1972non} and \cite[Ch. 1]{agranovich2015sobolev}):
\begin{align}\label{Hssps}
H^s(\mathbb{R}^d)=\Big\{u\in L^{2}(\mathbb {R}^d):\,  \lVert u\rVert_{H^s(\mathbb{R}^d)}^2=\int_{\mathbb R^d} (1+\lvert\xi\rvert^{2s})
\big|\mathscr{F}[u](\xi)\big|^{2}{\rm d}\xi<+\infty\Big\},
\end{align}
\subsection{Dunford-Taylor formula}
Besides the biorthogonal MCFs, the other key component of the spectral-Galerkin method in \cite{sheng2020fast} is the following Dunford-Taylor formula for fractional Laplacian.
\begin{lemma}\label{DTthm}
 For any $u,v\in H^{s}(\mathbb{R}^d)$ with $s\in (0,1)$, we have
\begin{equation}\label{DTfor}
\left((-\Delta)^{\frac{s}2} u,(-\Delta)^{\frac{s}2} v\right)_{L^2(\mathbb{R}^d)}=C_s \int_0^\infty t^{1-2s} \int_{\mathbb R^d}
 \big((-\Delta)(\mathbb I-t^2 \Delta)^{-1} u \big)(x)\,  v(x)\, {\rm d}x\,{\rm d}t,
\end{equation}
where $\mathbb I$ is the identity operator and
\begin{equation}
C_s=\frac{2 \sin (\pi s)}{\pi}.
\end{equation}
\end{lemma}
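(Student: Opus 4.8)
The plan is to pass everything to the Fourier side, where each operator appearing in \eqref{DTfor} acts as a multiplier, and thereby reduce the identity to a single scalar integral evaluation. By Plancherel's theorem the left-hand side becomes a weighted $L^2$ pairing: since $(-\Delta)^{s/2}$ carries the symbol $|\xi|^s$ (from \eqref{Ftransform}),
\[
\big((-\Delta)^{\frac{s}{2}}u,(-\Delta)^{\frac{s}{2}}v\big)_{L^2(\mathbb{R}^d)}=\int_{\mathbb{R}^d}|\xi|^{2s}\,\mathscr{F}[u](\xi)\,\overline{\mathscr{F}[v](\xi)}\,{\rm d}\xi.
\]
On the right-hand side the resolvent $(\mathbb{I}-t^2\Delta)^{-1}$ has symbol $(1+t^2|\xi|^2)^{-1}$ and $-\Delta$ has symbol $|\xi|^2$, so for each fixed $t$ the inner spatial integral equals $\int_{\mathbb{R}^d}\frac{|\xi|^2}{1+t^2|\xi|^2}\,\mathscr{F}[u](\xi)\,\overline{\mathscr{F}[v](\xi)}\,{\rm d}\xi$. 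The whole task is then to show that integrating the scalar symbol $\frac{|\xi|^2}{1+t^2|\xi|^2}$ against $t^{1-2s}$ over $t\in(0,\infty)$ reproduces $|\xi|^{2s}$ up to the factor $C_s^{-1}$.

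Next I would invoke Fubini to exchange the $t$- and $\xi$-integrations on the right, leaving the scalar kernel $\int_0^\infty t^{1-2s}\frac{|\xi|^2}{1+t^2|\xi|^2}\,{\rm d}t$ to be evaluated at each frequency $\xi$. The substitution $\tau=t|\xi|$ scales out the frequency entirely and yields $|\xi|^{2s}\int_0^\infty\frac{\tau^{1-2s}}{1+\tau^2}\,{\rm d}\tau$. The remaining one-dimensional integral is a classical Beta-function value: using $\int_0^\infty\frac{\tau^{a-1}}{1+\tau^2}\,{\rm d}\tau=\frac{\pi}{2\sin(\pi a/2)}$ with $a=2-2s$, together with the reflection identity $\sin(\pi-\pi s)=\sin(\pi s)$, it equals $\frac{\pi}{2\sin(\pi s)}$. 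Multiplying by $C_s=\frac{2\sin(\pi s)}{\pi}$ gives precisely $|\xi|^{2s}$, which matches the transformed left-hand side frequency by frequency and closes the identity.

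The step requiring the most care is the justification of the Fubini interchange and the convergence of the $t$-integral, which is exactly where the hypothesis $s\in(0,1)$ is used. Near $t=0$ the integrand behaves like $t^{1-2s}|\xi|^2$, integrable iff $s<1$, while as $t\to\infty$ it decays like $t^{-1-2s}$, integrable iff $s>0$; thus both endpoints are controlled precisely on $(0,1)$, and this is the structural reason the formula is stated only for that range. For the interchange of integrals I would observe that the kernel $\frac{t^{1-2s}|\xi|^2}{1+t^2|\xi|^2}$ is nonnegative, so Tonelli applies to the product $|\mathscr{F}[u]|\,|\mathscr{F}[v]|$, and the finiteness of $\int_{\mathbb{R}^d}|\xi|^{2s}|\mathscr{F}[u]|\,|\mathscr{F}[v]|\,{\rm d}\xi$ follows from the Cauchy--Schwarz inequality together with the assumption $u,v\in H^s(\mathbb{R}^d)$ via the norm in \eqref{Hssps}. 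This secures absolute convergence and legitimizes the swap, after which the remaining computation is purely algebraic.
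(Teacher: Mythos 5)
Your proof is correct and follows essentially the same route the paper takes: the paper defers this lemma to the cited reference, but its own proof of the companion Theorem \ref{DTs2thm} uses exactly this strategy — Parseval's identity, the scalar multiplier identity \eqref{ytxi00}, and recombination into resolvent form. Your only additions are the explicit Beta-function evaluation behind \eqref{ytxi00} and the Tonelli/Cauchy--Schwarz justification of the interchange of integrals, both of which the paper takes for granted; these are correct and welcome but do not change the argument.
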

Proof of the Dunford-Taylor formula can be found in \cite{bonito2019numerical}. In fact, similar results can be obtained for $s\in(1,2)$ by dividing $s$ into $s_{1}+s_{2}$, where $s_{1},s_{2}\in (0,1)$.
\begin{thm}\label{DTs2thm}
 For any $u\in H^{2s_{1}}(\mathbb{R}^{d})$, $v\in H^{2s_{2}}(\mathbb{R}^{d})$ with $s_{1},s_{2}\in (0,1)$, we have
\begin{equation}\label{DTfor}
\begin{split}
&\big((-\Delta)^{s_{1}} u,(-\Delta)^{s_{2}} v\big)=C_{s_{1}}C_{s_{2}} \int_0^\infty t_{1}^{1-2s_{1}} \int_0^\infty t_{2}^{1-2s_{2}}\\
&\qquad\qquad \Big((-\Delta)(\mathbb I-t_{1}^2 \Delta)^{-1} u(x),(-\Delta)(\mathbb I-t_{2}^2 \Delta)^{-1} v(x)\Big) {\rm d}t_{1}\,{\rm d}t_{2}.
 \end{split}
\end{equation}
\end{thm}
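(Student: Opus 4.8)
The plan is to reduce this operator identity to an elementary scalar one on the Fourier side, where the fractional Laplacian is diagonal, and then reassemble two one-dimensional integral representations into the stated double integral. Writing $a=|\xi|^2\ge 0$ and using Plancherel's theorem together with the definition \eqref{Ftransform}, the left-hand side becomes
\[
\big((-\Delta)^{s_{1}} u,(-\Delta)^{s_{2}} v\big)=\int_{\mathbb{R}^d}|\xi|^{2s_{1}}\,\mathscr{F}[u](\xi)\,\overline{|\xi|^{2s_{2}}\,\mathscr{F}[v](\xi)}\,{\rm d}\xi=\int_{\mathbb{R}^d}a^{s_{1}}a^{s_{2}}\,\mathscr{F}[u]\,\overline{\mathscr{F}[v]}\,{\rm d}\xi.
\]
The task is then to represent each factor $a^{s_j}$ as a one-dimensional integral in a resolvent variable $t_j$, independently for $j=1,2$.

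The scalar building block underlying Lemma \ref{DTthm} is the elementary identity
\[
a^{s}=C_{s}\int_0^\infty t^{1-2s}\,\frac{a}{1+t^2 a}\,{\rm d}t,\qquad a\ge 0,\ s\in(0,1),
\]
which is exactly the symbol form of the Dunford--Taylor representation, since under $\mathscr{F}$ one has $(-\Delta)(\mathbb I-t^2\Delta)^{-1}\leftrightarrow a/(1+t^2a)$. I would verify it by the substitution $r=t^2a$, which turns the right-hand side into $a^{s}\cdot\tfrac12\int_0^\infty r^{-s}(1+r)^{-1}\,{\rm d}r=a^{s}\cdot\tfrac{\pi}{2\sin(\pi s)}$ via the Beta-function integral, matching $C_s=2\sin(\pi s)/\pi$. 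The integrand is integrable at $t=0$ because $1-2s>-1$ and at $t=\infty$ because the resolvent factor decays like $t^{-2}$, so the representation holds for each of $s_1,s_2\in(0,1)$ separately.

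Substituting these representations for $a^{s_1}$ and $a^{s_2}$, and interchanging the spatial/Fourier integral with the two $t$-integrals, produces
\[
\big((-\Delta)^{s_{1}} u,(-\Delta)^{s_{2}} v\big)=C_{s_1}C_{s_2}\int_0^\infty\!\!\int_0^\infty t_1^{1-2s_1}t_2^{1-2s_2}\int_{\mathbb{R}^d}\frac{a}{1+t_1^2a}\mathscr{F}[u]\,\overline{\frac{a}{1+t_2^2a}\mathscr{F}[v]}\,{\rm d}\xi\,{\rm d}t_1\,{\rm d}t_2,
\]
and a final application of Plancherel identifies the inner $\xi$-integral with $\big((-\Delta)(\mathbb I-t_1^2\Delta)^{-1}u,\,(-\Delta)(\mathbb I-t_2^2\Delta)^{-1}v\big)_{L^2(\mathbb{R}^d)}$, which is the claim. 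Conceptually this is nothing more than applying the operator form of Lemma \ref{DTthm} to each factor and moving the inner product inside the two integrals.

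The hard part will be justifying the interchange of integration, i.e.\ verifying the Fubini--Tonelli hypothesis. Replacing every factor by its modulus and carrying out the two $t$-integrals first returns, via the scalar identity above, the quantity $(C_{s_1}C_{s_2})^{-1}\int_{\mathbb{R}^d}|\xi|^{2(s_1+s_2)}|\mathscr{F}[u]|\,|\mathscr{F}[v]|\,{\rm d}\xi$, which by the Cauchy--Schwarz inequality is bounded by $\big(\int|\xi|^{4s_1}|\mathscr{F}[u]|^2\big)^{1/2}\big(\int|\xi|^{4s_2}|\mathscr{F}[v]|^2\big)^{1/2}$. This is precisely where the hypotheses $u\in H^{2s_1}(\mathbb{R}^d)$ and $v\in H^{2s_2}(\mathbb{R}^d)$ enter through \eqref{Hssps}: they guarantee both factors are finite, so the triple integral converges absolutely and the interchange is legitimate. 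Finally, one should note that for the target range $s\in(1,2)$ a decomposition $s=s_1+s_2$ with $s_1,s_2\in(0,1)$ is always available, so the formula indeed extends the fractional Laplacian to this range.
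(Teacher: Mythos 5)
Your proposal is correct and follows essentially the same route as the paper's proof: Parseval's identity, the scalar representation $|\xi|^{2\alpha}=\frac{2\sin\pi\alpha}{\pi}\int_0^\infty \frac{|\xi|^2 t^{1-2\alpha}}{1+t^2|\xi|^2}\,{\rm d}t$ applied separately in $t_1$ and $t_2$, and an interchange of the $\xi$- and $t$-integrations. You additionally verify the scalar identity via the Beta integral and justify the Fubini--Tonelli interchange using the hypotheses $u\in H^{2s_1}$, $v\in H^{2s_2}$, details the paper leaves implicit.
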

\begin{proof}
By Parseval's identity, we have
\begin{equation}\label{FTs2}
\big((-\Delta)^{s_{1}} u,(-\Delta)^{s_{2}} v\big)=\Big(|\xi|^{2s_{1}}{\mathscr F}[u](\xi),\overline{|\xi|^{2s_{2}}{\mathscr F}[v](\xi)}\Big).
\end{equation}
Note that
\begin{equation}\label{ytxi00}
{|\xi|^{2\alpha}}=\frac  {2\sin \pi \alpha} \pi \int_{0}^\infty  \frac{ |\xi|^2\, t^{1-2\alpha}}{1+t^2 |\xi|^2} {\rm d}t,\quad \alpha\in(0,1).
\end{equation}
Then we have
\begin{equation*}
\begin{split}
&\quad\big((-\Delta)^{s_{1}} u,(-\Delta)^{s_{2}}v\big)\!=C_{s_{1}}C_{s_{2}}\!\!\int_{\mathbb{R}^{d}}\!\int_{0}^{\infty}\!\frac{|\xi|^{2}t_{1}^{1-2s_{1}}}{1+t_{1}^{2}|\xi|^2}{\rm d}t_{1}{\mathscr F}[u](\xi)\!\!\int_{0}^{\infty}\!\frac{|\xi|^{2}t_{2}^{1-2s_{2}}}{1+t_{2}^{2}|\xi|^2}{\rm d}t_{2}\overline{{\mathscr F}[v](\xi)}{\rm d}\xi\\
&=C_{s_{1}}C_{s_{2}}\int_{0}^{\infty}t_{1}^{1-2s_{1}}\int_{0}^{\infty}t_{2}^{1-2s_{2}}\Big(\frac{|\xi|^{2}}{1+t_{1}^{2}|\xi|^2}{\mathscr F}[u](\xi),\frac{|\xi|^{2}}{1+t_{2}^{2}|\xi|^2}\overline{{\mathscr F}[v](\xi)}\Big){\rm d}t_{1}{\rm d}t_{2}\\
&=C_{s_{1}}C_{s_{2}}\int_{0}^{\infty}t_{1}^{1-2s_{1}}\int_{0}^{\infty}t_{2}^{1-2s_{2}}\Big((-\Delta)(\mathbb I-t_{1}^2 \Delta)^{-1} u(x),(-\Delta)(\mathbb I-t_{2}^2 \Delta)^{-1} v(x)\Big){\rm d}t_{1}{\rm d}t_{2}.
\end{split}
\end{equation*}
\end{proof}
\section{Spectral-Galerkin method}
In this section, we conduct the numerical approximations for \eqref{frde}, where we first formulate the spectral-Galerkin method to deal with the space fractional derivatives, and later advance the resulting ODE in time with ETDRK4.
\subsection{Space discretisation}
A weak form of \eqref{frde} is to find $u\in H^s(\mathbb{R}^d)$ such that
\begin{equation}\label{weakform}
\Big(\frac{\partial u}{\partial t},v\Big)=-D\big((-\Delta)^{s/2}u,(-\Delta)^{s/2}v\big)+\big(f(u),v\big), \quad \forall v\in H^s(\mathbb{R}^d).
\end{equation}
Then using the Dunford-Taylor formula in Lmm.\,\ref{DTthm} for $s\in(0,1)$, our spectral-Galerkin method is to find $u_{N}\in\mathbb{V}_{N}^{d}$ such that
\begin{equation}\label{galerkin}
\Big(\frac{\partial u_{N}}{\partial t},v_{N}\Big)=-D\,C_s \int_0^\infty t^{-1-2s} (u_N-w_N, v_N){\rm d}t+\big(I_{N}f(u_{N}),v_{N}\big), \quad \forall v_{N}\in\mathbb{V}_{N}^{d},
\end{equation}
where we find $w_N:=w_N(u_N,t)\in \mathbb{V}_{\!N}^d$ such that for any $t>0,$
\begin{equation}\label{bSeqn}
t^{2} (\nabla w_N,\nabla \psi)+(w_N, \psi)=(u_N, \psi), \quad \forall \psi \in \mathbb{V}_{\!N}^d.
\end{equation}
For $s\in(1,1.5)$, using the Dunford-Taylor formula in Thm \ref{DTs2thm}, the spectral-Galerkin method is to find  $u_N\in {\mathbb V}_{\!N}^d$ such that
\begin{equation}\label{auveqn23}
\begin{split}
 \Big(\frac{\partial u_{N}}{\partial t},v_{N}\Big)&=-D\,C_{s/2}\,C_{s/2}\!\int_{0}^{\infty}\!t_{1}^{-1-s}\!\int_{0}^{\infty}\!t_{2}^{-1-s}(u_N-w_N, v_N-\rho_{N})\,  {\rm d}t_{1}{\rm d}t_{2}\\
 & \qquad+\big(I_{N}f(u_{N}),v_{N}\big),\quad \forall v_N\in \mathbb{V}_{\!N}^d,
 \end{split}
\end{equation}
where we find $w_N:=w_N(u_N,t)\in \mathbb{V}_{\!N}^d$ and $\rho_N:=\rho_N(v_N,t)\in \mathbb{V}_{\!N}^d$ such that for any $t>0,$
\begin{equation}\label{bSeqn}
\begin{split}
&t^{2} (\nabla w_N,\nabla \psi)+(w_N, \psi)=(u_N, \psi), \quad \forall \psi \in \mathbb{V}_{\!N}^d;\\
&t^{2} (\nabla \rho_N,\,\nabla \chi)\,+(\rho_N,\, \chi)=(v_N,\, \chi), \quad \forall \chi \in \mathbb{V}_{\!N}^d.
\end{split}
\end{equation}

Let $u_{N}(x,t)=\sum_{n\in\Upsilon_{N}}\widehat{u}_{n}(t)\widehat{\mathbb{T}}_{n}(x)$ and $v_{N}=\widehat{\mathbb{T}}_{n}(x)$, $n=(n_{1},n_{2},\cdots,n_{d})\in\Upsilon_{N}$, then similar to \cite[Thm 3.2]{sheng2020fast}, we have the following unified semi-discrete form for all $s\in(0,1.5)$
\begin{equation}\label{semidiscrete}
\frac{\partial \widehat{u}_{n}(t)}{\partial t}=-D|\lambda_{n}|_{1}^{s}\widehat{u}_{n}(t)+\widehat{f}_{n}(u_{N}), \quad n\in\Upsilon_{N},
\end{equation}
where
\begin{equation}
|\lambda_{n}|_{1}^{s}=(\lambda_{n_{1}}+\lambda_{n_{2}}+\cdots+\lambda_{n_{d}})^{s},\quad \widehat{f}_{n}(u_{N})=\big(I_{N}f(u_{N}),\widehat{\mathbb{T}}_{n}\big), \quad n\in\Upsilon_{N}.
\end{equation}
We remark that a diagonal representation of the fractional Laplace operator is obtained based on the biorthogonal MCFs in \eqref{mathTT} and the Dunford-Taylor formula in Lmm. \ref{DTthm} and Thm. \ref{DTs2thm} yields a unified semi-discrete form for all $s\in(0,1.5)$.
\subsection{Time discretisation}
We first write the resulted ODE system \eqref{semidiscrete} in the following more general form:
 \begin{equation}\label{semilinearparabolic}
{\mathcal U}_{t}=\mathbf{L}{\mathcal U}+\mathbf{N}({\mathcal U},t),
\end{equation}
where $\mathbf{L}$ and $\mathbf{N}$ denote linear and nonlinear terms, respectively. In our case, $\mathbf{L}$ is diagonal matrix with the components $-D|\lambda_{n}|_{1}^{s}$, and $\mathbf{N}({\mathcal U})$ represents the expansion coefficients of reaction term.

Typical numerical methods for solving problems of this kind include implicit-explicit method, integrating factor method and exponential time differencing (ETD) method, etc. To remove the stiffness caused by the linear part, we choose the ETD method which involves exact integration of the linear part followed by an approximation of the integral of the nonlinear term. Multiplying \eqref{semilinearparabolic} by the term $e^{-\mathbf{L}t}$ and integrating over a single time step $\tau$ give
\begin{equation}
{\mathcal U}_{n+1}=e^{\mathbf{L}\tau}{\mathcal U}_{n}+e^{\mathbf{L}\tau}\int_{0}^{\tau}e^{-\mathbf{L}t}\,\mathbf{N}\big({\mathcal U(t_{n}+t)},t_{n}+t\big)\,{\rm d}t.
\end{equation}

For the approximation of the integral involving the nonlinear term, Runge-Kutta method is taken since it is easy to implement and requires only one previous evaluation when compared to multistep methods. ETDRK4 method was first proposed in \cite{cox2002exponential}, and then modified in \cite{kassam2005fourth} to increase numerical stability. In this work, we utilize the fourth-order Runge-Kutta scheme of Krogstad in \cite{krogstad2005generalized}, denoted as ETDRK4-B, which has smaller local truncation error and larger stability properties. More precisely, for the semi-discrete ODE system \eqref{semilinearparabolic}, the ETDRK4 scheme is
\begin{equation*}
\begin{split}
{\mathcal U}_{n+1}=&\;e^{{\mathbf{L}}\tau}{\mathcal U}_{n}+\tau\big[4\varphi_{3}(\mathbf{L}\tau)-3\varphi_{2}(\mathbf{L}\tau)+\varphi_{1}(\mathbf{L}\tau)\big]{\mathbf{N}}({{\mathcal U}_{n},t_{n}})\\
&+2\tau\big[\varphi_{2}(\mathbf{L}\tau)-2\varphi_{3}(\mathbf{L}\tau)\big]\mathbf{N}({\mu_{2},t_{n}+\tau/2})\\
&+2\tau\big[\varphi_{2}(\mathbf{L}\tau)-2\varphi_{3}(\mathbf{L}\tau)\big]\mathbf{N}(\mu_{3},t_{n}+\tau/2)\\
&+\tau\big[4\varphi_{3}(\mathbf{L}\tau)-\varphi_{2}(\mathbf{L}\tau)\big]{\mathbf{N}}({\mu_{4},t_{n}+\tau}),
\end{split}
\end{equation*}
where $\tau$ is the time stepsize and the stages $\mu_{2},\,\mu_{3},\,\mu_{4}$ are defined as
\begin{equation*}
\begin{split}
\mu_{2}=&\;e^{{\mathbf{L}}\tau/2}{\mathcal U}_{n}+(\tau/2)\varphi_{1}(\mathbf{L}\tau/2){{\mathbf{N}}}({\mathcal U}_{n},t_{n}),\\
\mu_{3}=&\;e^{{\mathbf{L}}\tau/2}{\mathcal U}_{n}+(\tau/2)\big[\varphi_{1}(\mathbf{L}\tau/2)-2\varphi_{2}(\mathbf{L}\tau/2)\big]{{\mathbf{N}}}({\mathcal U}_{n},t_{n})+\tau\varphi_{2}(\mathbf{L}\tau/2){{\mathbf{N}}}(\mu_{2},t_{n}+\tau/2),\\
\mu_{4}=&\;e^{{\mathbf{L}}\tau}{\mathcal U}_{n}+\tau\big[\varphi_{1}(\mathbf{L}\tau)-2\varphi_{2}(\mathbf{L}\tau)\big]{{\mathbf{N}}}({\mathcal U}_{n},t_{n})+2\tau\varphi_{2}(\mathbf{L}\tau){{\mathbf{N}}}(\mu_{3},t_{n}+\tau),
\end{split}
\end{equation*}
with the functions $\varphi_{1},\,\varphi_{2},\,\varphi_{3}$ given as
\begin{equation*}
\varphi_{1}(z)=\frac{e^{z}-1}{z},\quad\varphi_{2}(z)=\frac{e^z-1-z}{z^2},\quad\varphi_{3}(z)=\frac{e^z-1-z-z^2/2}{z^3}.
\end{equation*}

%Details of derivation and stability of the ETDRK4 method can be found in \cite{cox2002exponential,kassam2005fourth,krogstad2005generalized,owolabi2016effect,owolabi2016numerical}.
%
%\begin{equation*}
%\begin{split}
%a_{n}=&\;e^{{\mathbf{L}}\tau/2}u_{n}+{\mathbf{L}}^{-1}(e^{\mathbf{L}\tau/2}-\mathbf{I}){{\mathbf{N}}}(u_{n},t_{n}),\\
%b_{n}=&\;e^{{\mathbf{L}}\tau/2}u_{n}+{\mathbf{L}}^{-1}(e^{\mathbf{L}\tau/2}-\mathbf{I}){{\mathbf{N}}}(a_{n},t_{n}+\tau/2),\\
%c_{n}=&\;e^{{\mathbf{L}}\tau/2}u_{n}+{\mathbf{L}}^{-1}(e^{\mathbf{L}\tau/2}-\mathbf{I})\big(2\mathbf{N}(b_{n},t_{n}+\tau/2)-\mathbf{N}(u_{n},t_{n})\big),\\
%u_{n+1}=&\;e^{{\mathbf{L}}\tau}u_{n}+\tau^{-2}{\mathbf{L}}^{-3}\Big\{\big[\!-4-{\mathbf{L}}\tau+e^{\mathbf{L}\tau}\big(4-3\mathbf{L}\tau+(\mathbf{L}\tau)^{2}\big)\big]{\mathbf{N}}({u_{n},t_{n}})\\
%&+2\big[2+\mathbf{L}\tau+e^{\mathbf{L}\tau}(-2+\mathbf{L}\tau)\big]\big(\mathbf{N}({a_{n},t_{n}+\tau/2})+\mathbf{N}(b_{n},t_{n}+\tau/2)\big)\\
%&+\big[\!-4-3\mathbf{L}\tau-(\mathbf{L}\tau)^{2}+e^{\mathbf{L}\tau}(4-\mathbf{L}\tau)\big]{\mathbf{N}}({c_{n},t_{n}+\tau})\Big\}.
%\end{split}
%\end{equation*}
\subsection{Linear stability analysis}
The stability analysis of the ETDRK4 method is as follows (see also \cite{krogstad2005generalized,owolabi2016effect,owolabi2016numerical}). For the nonlinear ODE \begin{equation}
{\mathcal U}_{t}=\lambda\,{\mathcal U}+\mathbf{N}({\mathcal U}),
\end{equation}
we suppose that there exists a fixed point ${\mathcal U}_{0}$ such that $\lambda\, {\mathcal U}_{0}+\mathbf{N}({\mathcal U}_{0})=0$. Linearizing about this fixed point leads to
\begin{equation}\label{linearsystem}
{\mathcal U}_{t}=\lambda\,{\mathcal U}+\rho\,{\mathcal U}.
\end{equation}
Here ${\mathcal U}$ is a perturbation to ${\mathcal U}_{0}$, and $\rho=\mathbf{N}'({\mathcal U})$ at ${\mathcal U}={\mathcal U}_{0}$. Then for this ODE, the fixed point ${\mathcal U}_{0}$ is stable if $\text{Re}(\lambda+\rho)<0$.

The application of ETDRK4 method to \eqref{linearsystem} leads to a recurrence relation involving ${\mathcal U}_{n}$ and ${\mathcal U}_{n+1}$. Introducing the notation $x=\rho\tau$, $y=\lambda\tau$, we obtain the following amplification factor
\begin{equation}
\frac{{\mathcal U}_{n+1}}{{\mathcal U}_{n}}=r(x,y)=c_{0}+c_{1}x+c_{2}x^2+c_{3}x^3+c_{4}x^4,
\end{equation}
where
\begin{equation*}
\begin{split}
c_{0}&=e^{y},\\
c_{1}&=4\varphi_{3}(y)-3\varphi_{2}(y)+\varphi_{1}(y)+4e^{y/2}\big(\varphi_{2}(y)-2\varphi_{3}(y)\big)+e^{y}\big(4\varphi_{3}(y)-\varphi_{2}(y)\big),\\
c_{2}&=2\big(\varphi_{1}(y/2)-\varphi_{2}(y/2)+e^{y/2}\varphi_{2}(y/2)\big)\big(\varphi_{2}(y)-2\varphi_{3}(y)\big)\\
&\quad+\big(\varphi_{1}(y)-2\varphi_{2}(y)+2e^{y/2}\varphi_{2}(y)\big)\big(4\varphi_{3}(y)-\varphi_{2}(y)\big),\\
c_{3}&=\Big(\varphi_{2}(y)\big(\varphi_{1}(y/2)-2\varphi_{2}(y/2)\big)+2e^{y/2}\varphi_{2}(y/2)\varphi_{2}(y)\Big)\big(4\varphi_{3}(y)-\varphi_{2}(y)\big)\\
&\quad+\varphi_{1}(y/2)\varphi_{2}(y/2)\big(\varphi_{2}(y)-2\varphi_{3}(y)\big)\\
c_{4}&=\varphi_{1}(y/2)\varphi_{2}(y/2)\varphi_{2}(y)\big(4\varphi_{3}(y)-\varphi_{2}(y)\big).
\end{split}
\end{equation*}
We observe that as $y\to 0$,
\begin{equation*}
\varphi_{1}(y)\to 1,\quad \varphi_{2}(y)\to \frac{1}{2},\quad\varphi_{3}(y)\to \frac{1}{6},
\end{equation*}
then the fourth-order Runge-Kutta method is recovered where
\begin{equation*}
r(x)=1+x+\frac{x^2}{2}+\frac{x^3}{6}+\frac{x^4}{24}.
\end{equation*}

In Fig.\,\ref{stability}, we give the stability region of ETDRK4 method for several different nonpositive $y$, we observe that stability region grows as the absolute value of $y$ increases. We also present in the right figure of Fig.\,\ref{stability} the maximum and minimum eigenvalues of the stiffness matrix $\bf{S}$ for different $N$. It is observed that maximum eigenvalues increase with $N$ approximately as $N^2$ while minimum eigenvalues decrease as $N^{-2}$. Then for the resulted ODE system \eqref{semilinearparabolic} from our spectral-Galerkin method, the stiffness ratio is roughly $N^{4s}$. Thus proper treatment of the linear part is necessary to reduce the excessive restriction on step size and this is the advantage of the ETD method. At the same time, minimum eigenvalues go to $0$ as $N$ increases, then stability region will shrink to the standard fourth-order Runge-Kutta method as $N$ goes to infinity.
\begin{figure}[!h]
\centering
\subfigure[Stability region for different $y$]{
\includegraphics[width=0.45\textwidth]{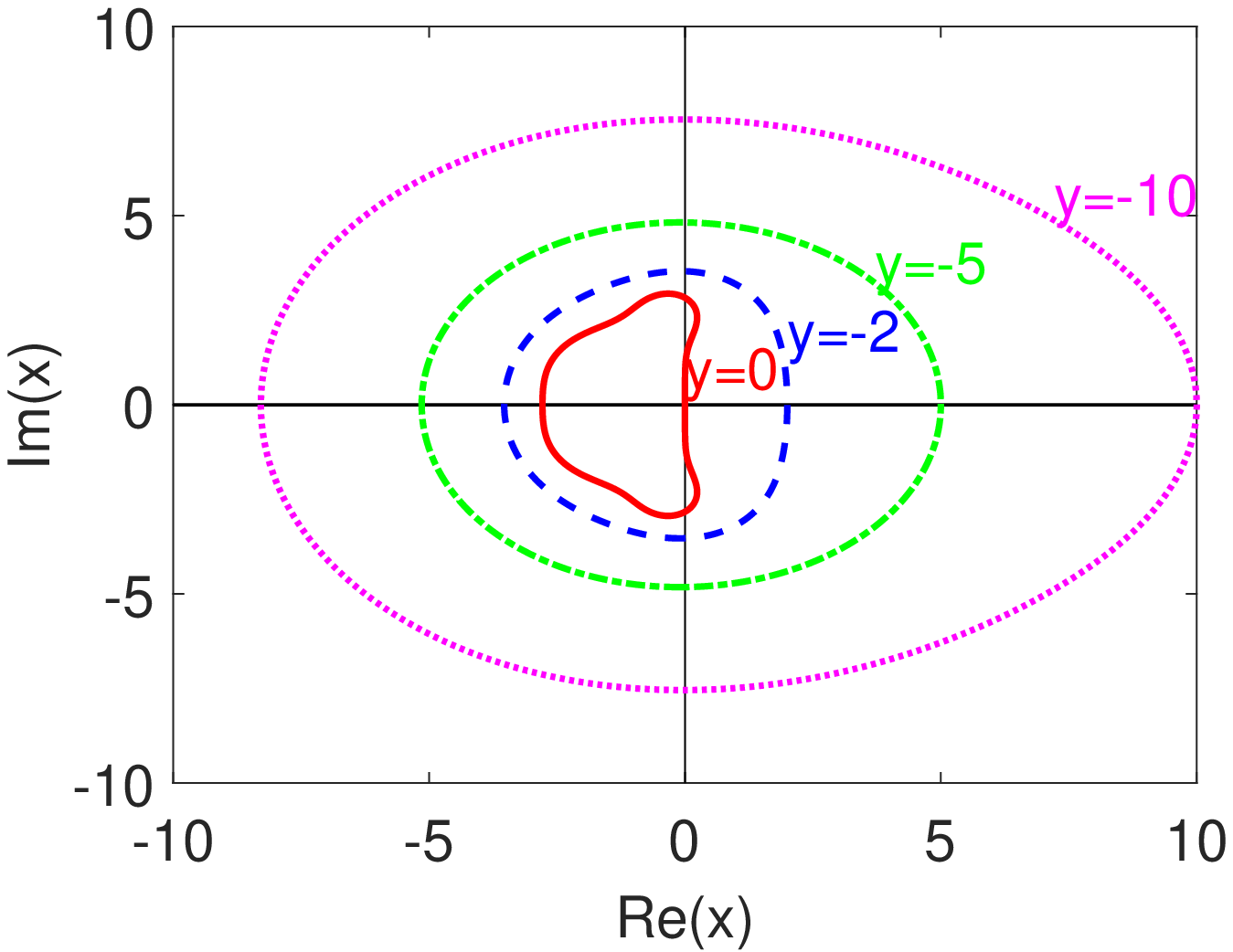}}
\subfigure[Eigenvalues of $\bf{S}$]{
\includegraphics[width=0.45\textwidth]{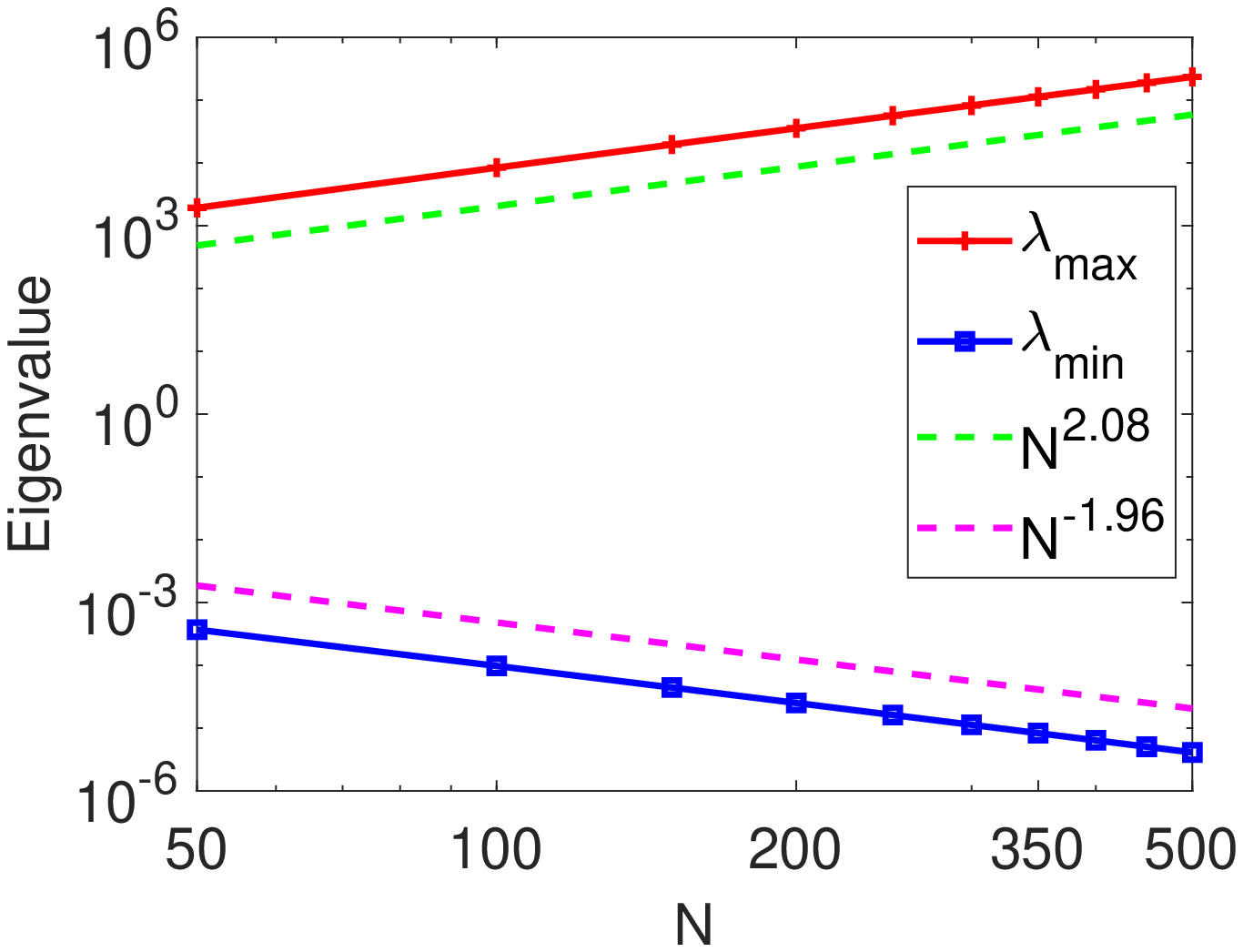}}
\label{stability}
\caption{Linear stability analysis of ETDRK4.}
\end{figure}

%
%\begin{remark}
%Other high order schemes can also be applied, e.g., BDF2.
%\end{remark}
%\begin{remark}
%Unfortunately, maximum principle is not conserved here.
%\end{remark}
%\section{Error analysis}

\section{Numerical examples}

Several numerical examples will be carried out in this section to demonstrate the effectiveness of the spectral-Galerkin methods for reaction-diffusion equations in unbounded domains. Specifically, we study the fractional version of Allen-Cahn equation, Fisher-Kolmogorov equation, Gray-Scott equation and FitzHugh-Nagumo equation in one-, two- and three-dimensions.

\subsection{Convergence test}

We first consider a modified one-dimensional Allen-Cahn equation:
\begin{equation*}
u_{t}+\epsilon^2 (-\Delta)^s u(x)+f(u)=g(x,t),\quad x\in\mathbb{R},\;t\in(0,T].
\end{equation*}
Here ``modified" means we take a different nonlinear term $f(u) = F^\prime (u)$, where $F(u)$ is a double-well potential free energy density. In this paper, we study the double-well potential given by
\begin{equation}\label{Fu}
F(u) = \frac{u^2(u-1)^2}{4}\;\;\; {\rm with}\;\;\; f(u)=\frac{u(u-1)(2u-1)}{2},
\end{equation}
as in \cite{li2017space,liu2018time}. Note that $F(u)$ has energy minima at $u=0 \text{ or }1$, different from the usual double-well potential at $u=\pm 1$, i.e., $F(u)=(u^2-1)^2/4$. This is due to the homogenous boundary condition in \eqref{frde} that $u(x)\to 0$, as $|x|\to\infty$. In order to compute with an exact solution, an extra source term  $g(x,t)$ is chosen
\begin{eqnarray} \label{eq51}
g(x,t)&=& \epsilon^2\exp(-t)\frac{(2\lambda)^{2s}\Gamma(s+\frac{1}{2})}
{\Gamma(\frac{1}{2})}{}_{1}F_{1}\big(s+\frac{1}{2};\frac{1}{2};-\lambda^2x^2\big)
+\exp(-3t)\exp(-3\lambda^2x^2)
\nonumber \\
&& \quad -\frac{3}{2}\exp(-2t)\exp(-2\lambda^2x^2)-\frac{1}{2}\exp(-t)\exp(-\lambda^2x^2),
\end{eqnarray}
where ${}_{1}F_{1}\big(a; b; x)$ is the confluent hypergeometric function defined as
\begin{equation}\label{1f1}
{}_{1}F_{1}(a;b;x)=\sum_{n=0}^\infty\frac{(a)_n\,x^n}{(b)_n\,n!},
\end{equation}
The source term (\ref{eq51}), together with an appropriate initial data, yields an exact solution $u(x,t)=\exp(-t)\exp(-\lambda^2x^2)$.
This example aims to test the temporal convergence rate for the ETDRK4 scheme established for the fractional reaction-diffusion equation.

In this test, we take $\epsilon=0.01$, $T=6$, $\lambda=1$, $N=500$ and several fractional parameters, i.e.,  $s=0.6,\;0.9\text{ and }1.2$. In this example, the relative numerical error is computed via
\begin{equation}
\text{e}_{\infty}=\frac{\lVert u_{N}-u\rVert_{\infty}}{ \lVert u\rVert_{\infty}},\qquad\;\text{e}_{L^2}=\frac{\lVert u_{N}-u\rVert_{2}}{\lVert u\rVert_{2}}.
\end{equation}
Numerical results with respect to time step size $\tau$ is given in Fig.\,\ref{convergence2}, where the convergence order is shown to be approximately $\tau^{4}$, which is in good agreement with the theoretical prediction.

We also present in Fig.\,\ref{convergence2} numerical errors in two-dimensional space with same parameters, where the exact solution is chosen as
\begin{equation*}
u(x,y,t)=\exp(-t)\exp(-(x^2+y^2)).
\end{equation*}
Similar results are obtained.

\begin{figure}[!h]
\centering
\subfigure[$s=0.6$]{
\includegraphics[width=0.31\textwidth]{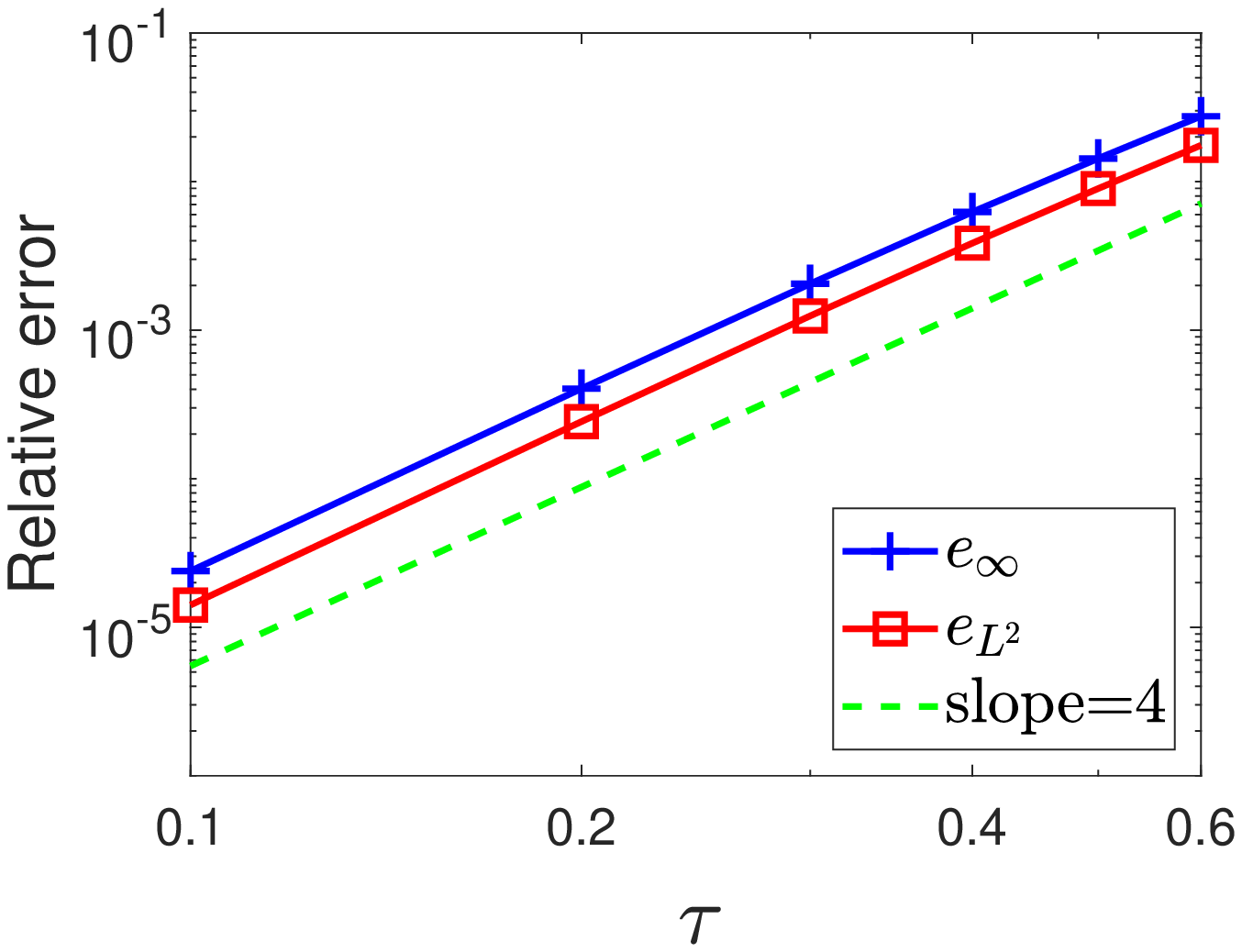}}
\subfigure[$s=0.9$]{
\includegraphics[width=0.31\textwidth]{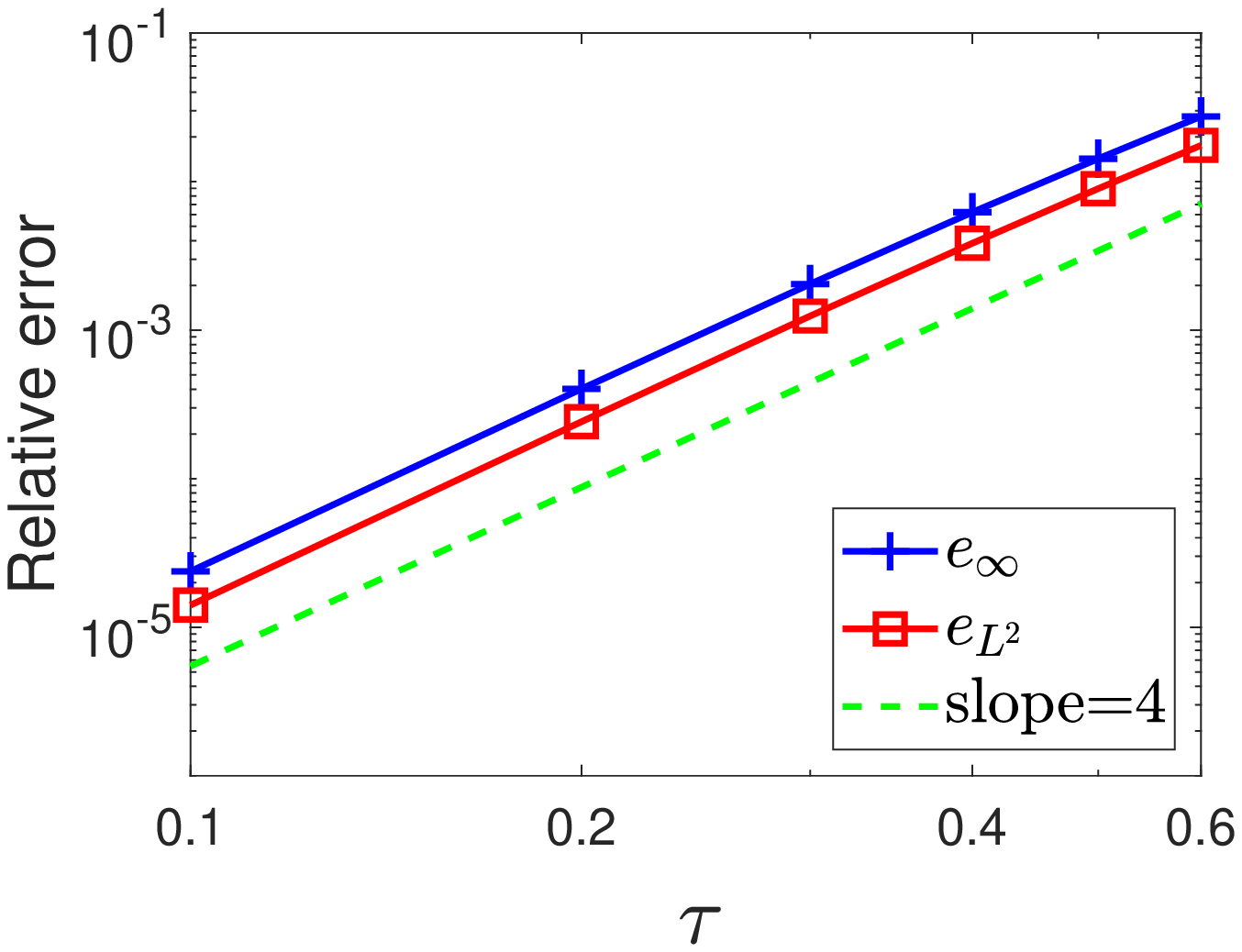}}
\subfigure[$s=1.2$]{
\includegraphics[width=0.31\textwidth]{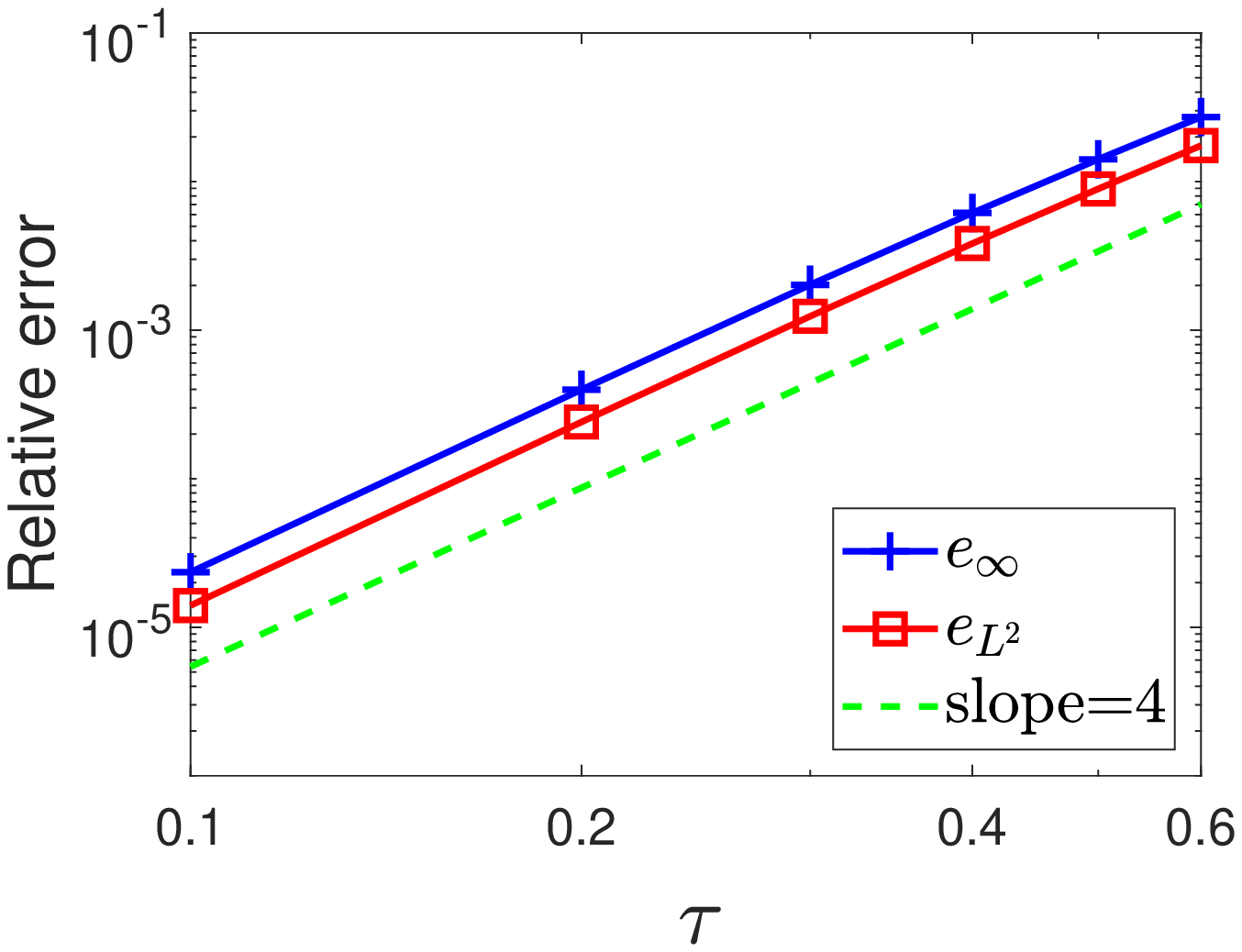}}
\label{convergence1}
\caption{Convergence of numerical errors for 1-d problem.}
\end{figure}

\begin{figure}[!h]
\centering
\subfigure[$s=0.6$]{
\includegraphics[width=0.31\textwidth]{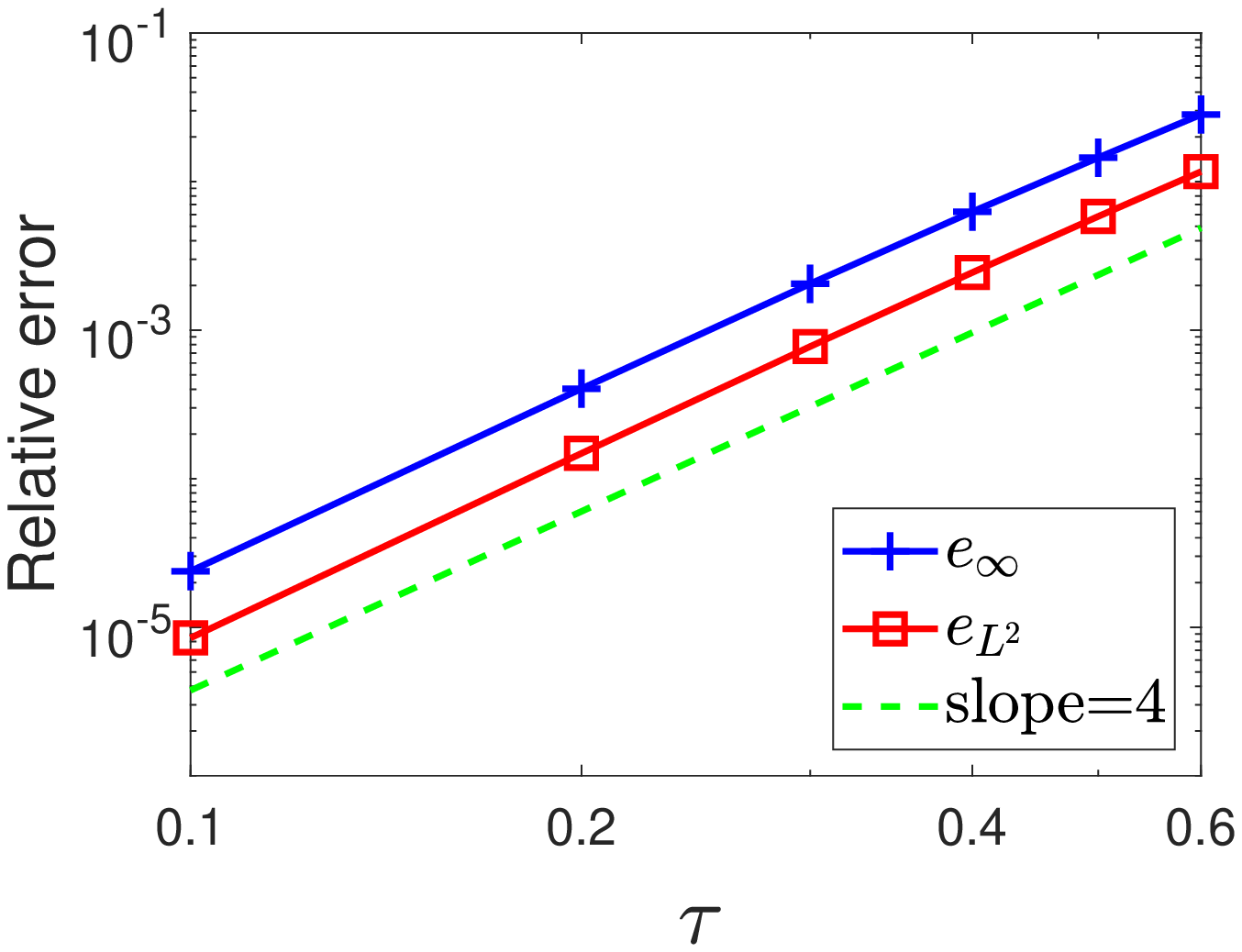}}
\subfigure[$s=0.9$]{
\includegraphics[width=0.31\textwidth]{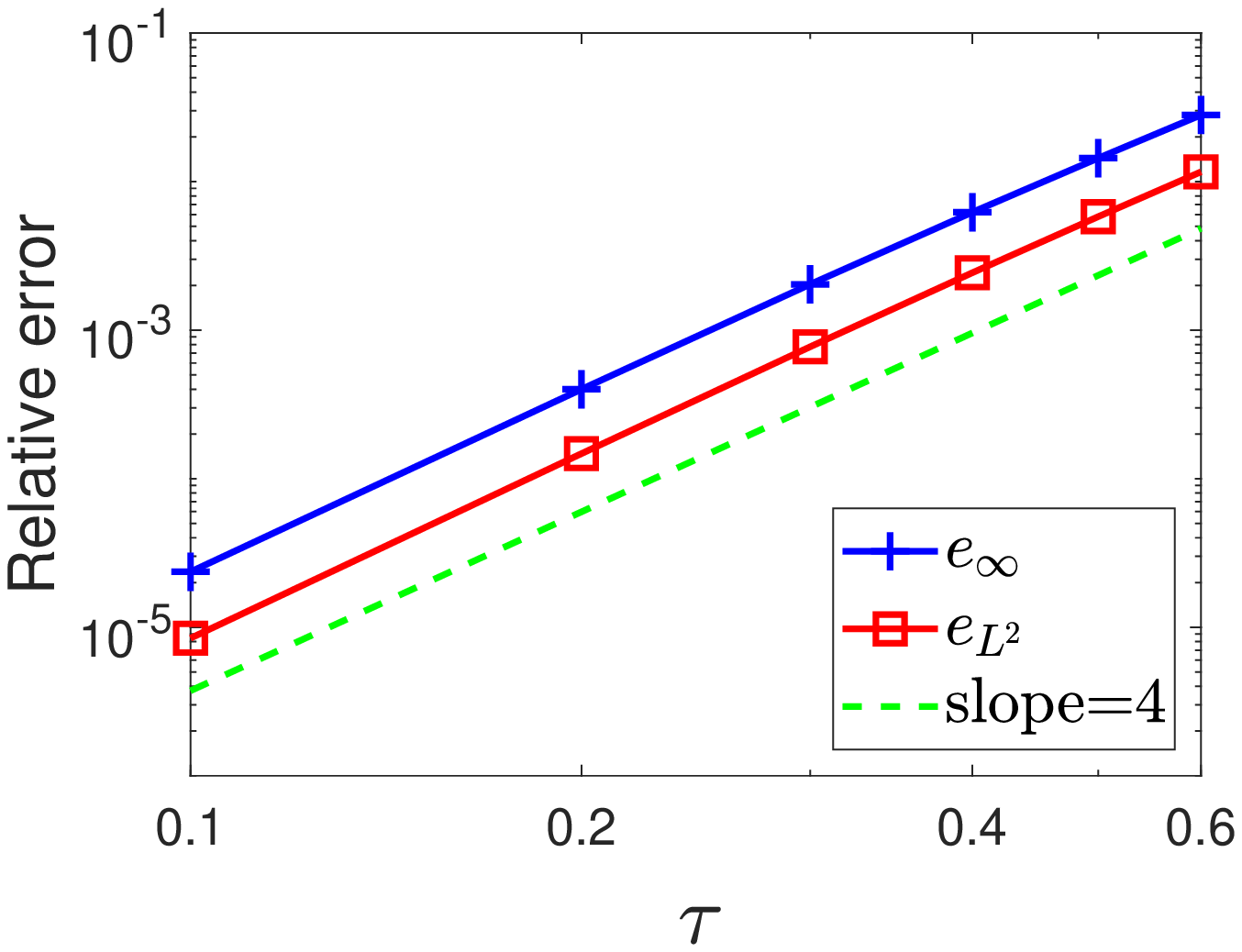}}
\subfigure[$s=1.2$]{
\includegraphics[width=0.31\textwidth]{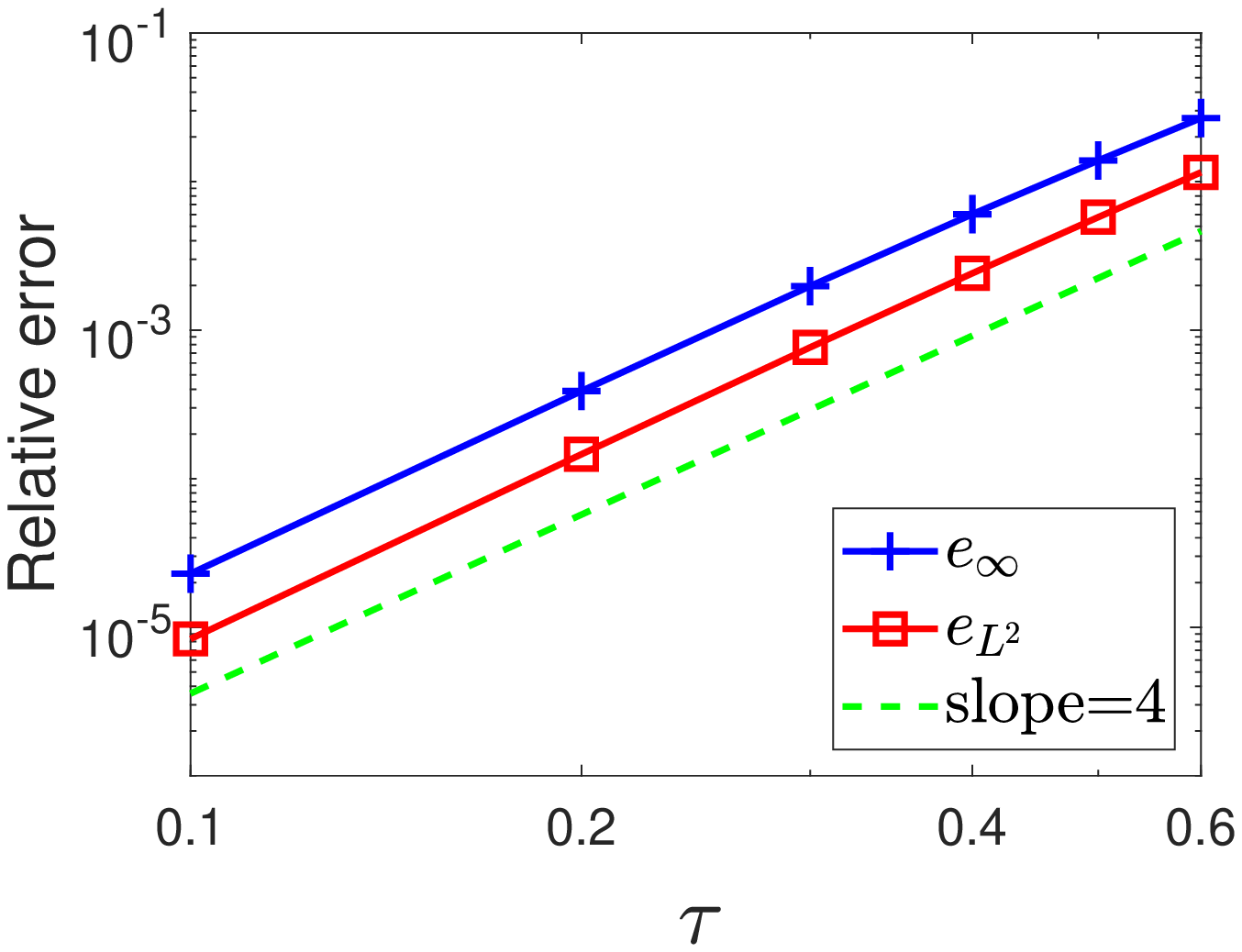}}
\label{convergence2}
\caption{Convergence of numerical errors for 2-d problems.}
\end{figure}

\subsection{Fisher-Kolmogorov equation}
In this part, we apply the spectral-Galerkin method to the Fisher-Kolmogorov equation, i.e., \eqref{frde}  with a quadratic nonlinear reaction term:
\begin{equation} \label{eq53}
f(u)=ru\left(1-\frac{u}{K}\right),
\end{equation}
where $ r$ is the intrinsic growth rate of a species and $K$ is the environment carrying capacity, representing the maximum sustainable population density.
The Fisher-Kolmogorov equation has attracted a lot of interest due to its wide application to model the growth and spreading of biological species. We first study this equation in one-dimension, and take $D=0.1$, $r=0.25$, $K=1$ and $N=1000$. Numerical results are given in Fig.\,\ref{FK1D}. In $(a)$, the profiles of the numerical solution for $s=0.8$ are given for various times, e.g., $t=0,\,10,\,20$ and $30$, where an accelerating front can be observed as time evolves. We then show in $(b)$ the position of $x$, denoted as $x_{0}$, where $u(x)$ first exceeds $0.5$ versus $t$ on a semi-log plot. It is observed that the front expands exponentially with time, in agree with the results in \cite{del2003front} for one-sided fractional derivatives. Numerical results of $u(x)$ at final time $T=30$ is given in $(c)$ for several different $s$, we see a larger accelerating front for smaller $s$, in agreement with $(b)$. We finally show in $(d)$ the asymptotic behavior of the numerical solution when $|x|$ is relatively large. Different from the classical case when $s=1$, an algebraical decay is observed for fractional $s$, which is approximately $|x|^{-2s-1}$. This can then be explained by the presence of non-Gaussian diffusion when $s$ is not an integer.
\begin{figure}[!h]
\centering
\subfigure[Profile of solution for $s=0.8$]{
\includegraphics[width=0.45\textwidth]{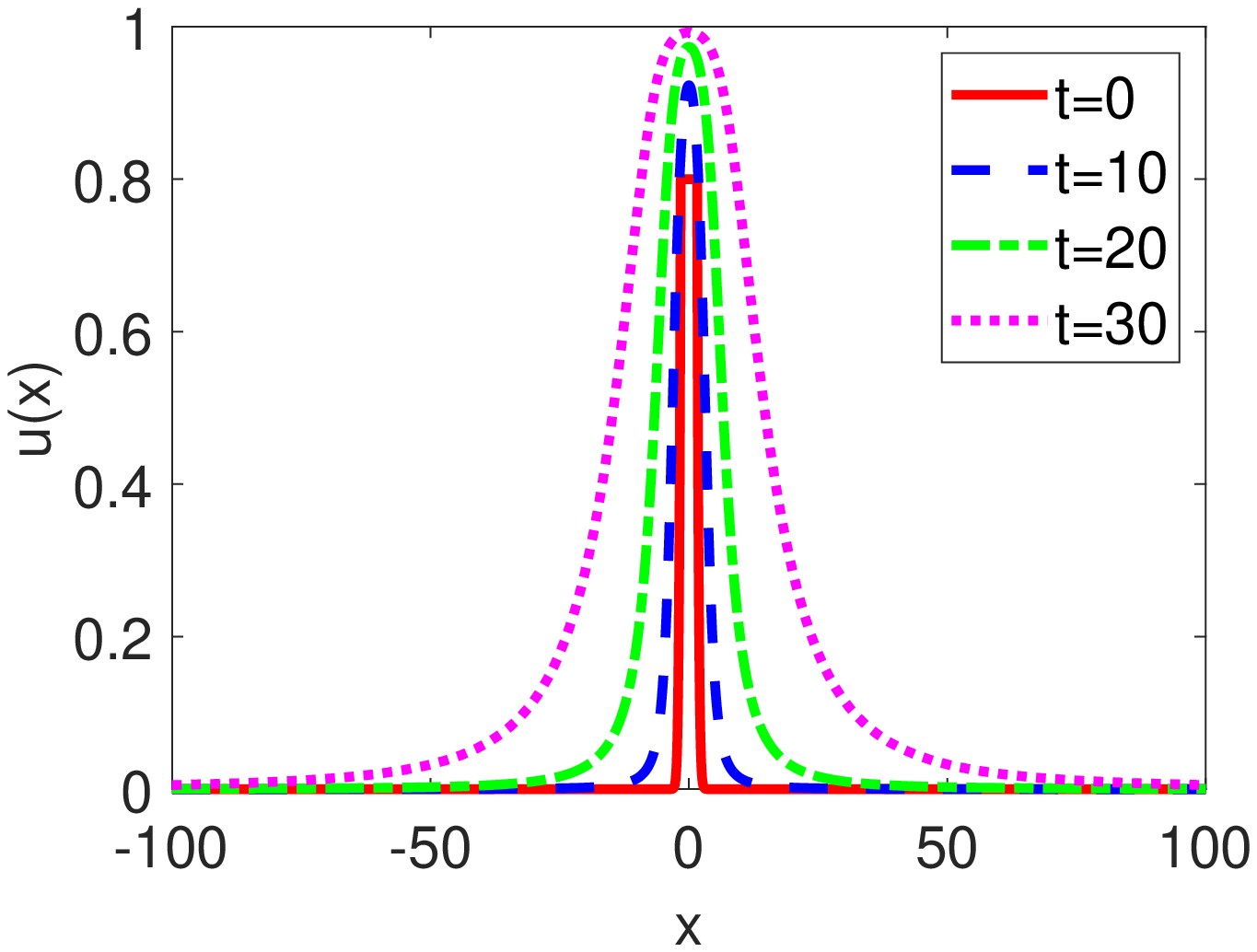}}
\subfigure[Accelerating front for $u(x)=0.5$]{
\includegraphics[width=0.45\textwidth]{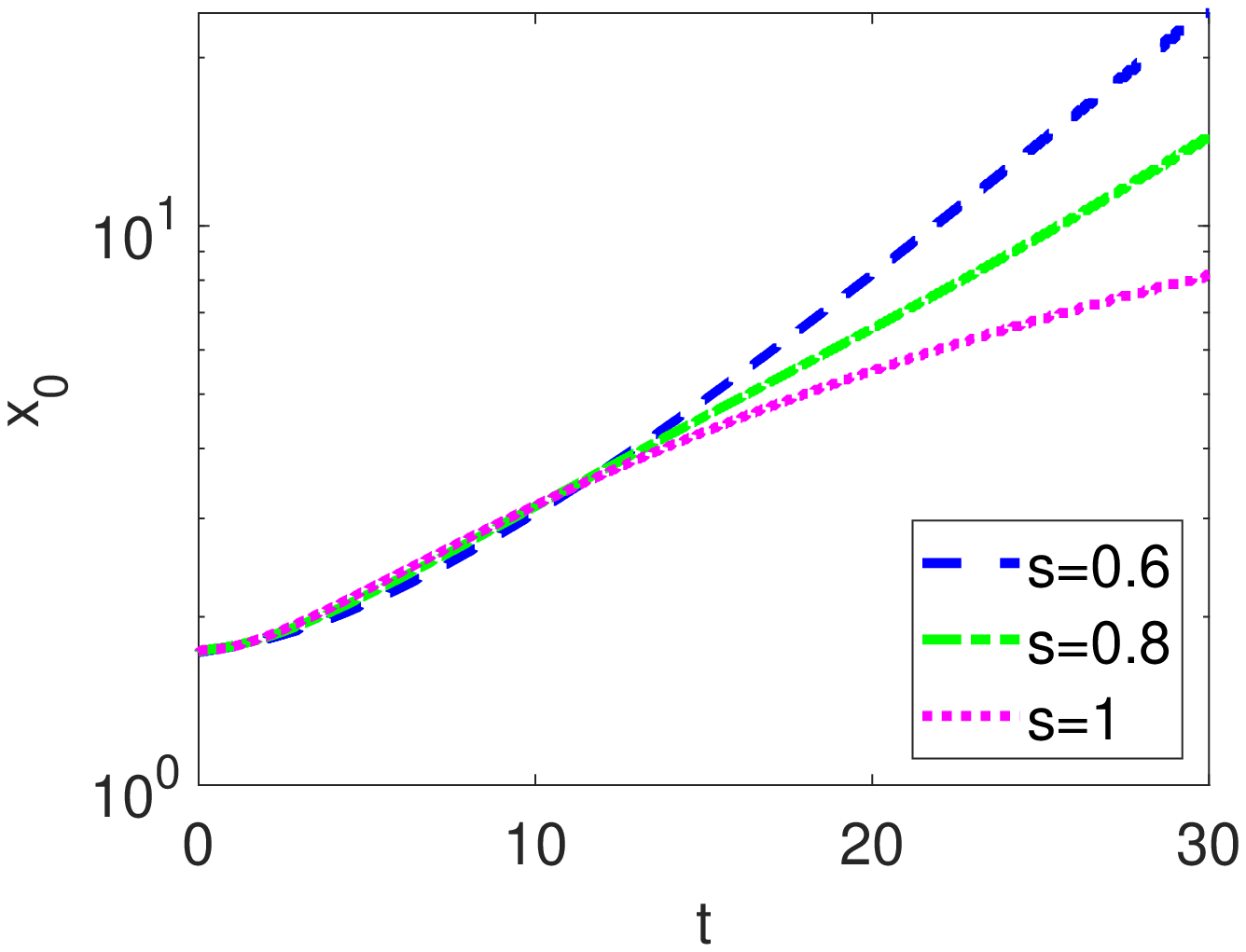}}
\subfigure[Final state when $T=30$.]{
\includegraphics[width=0.45\textwidth]{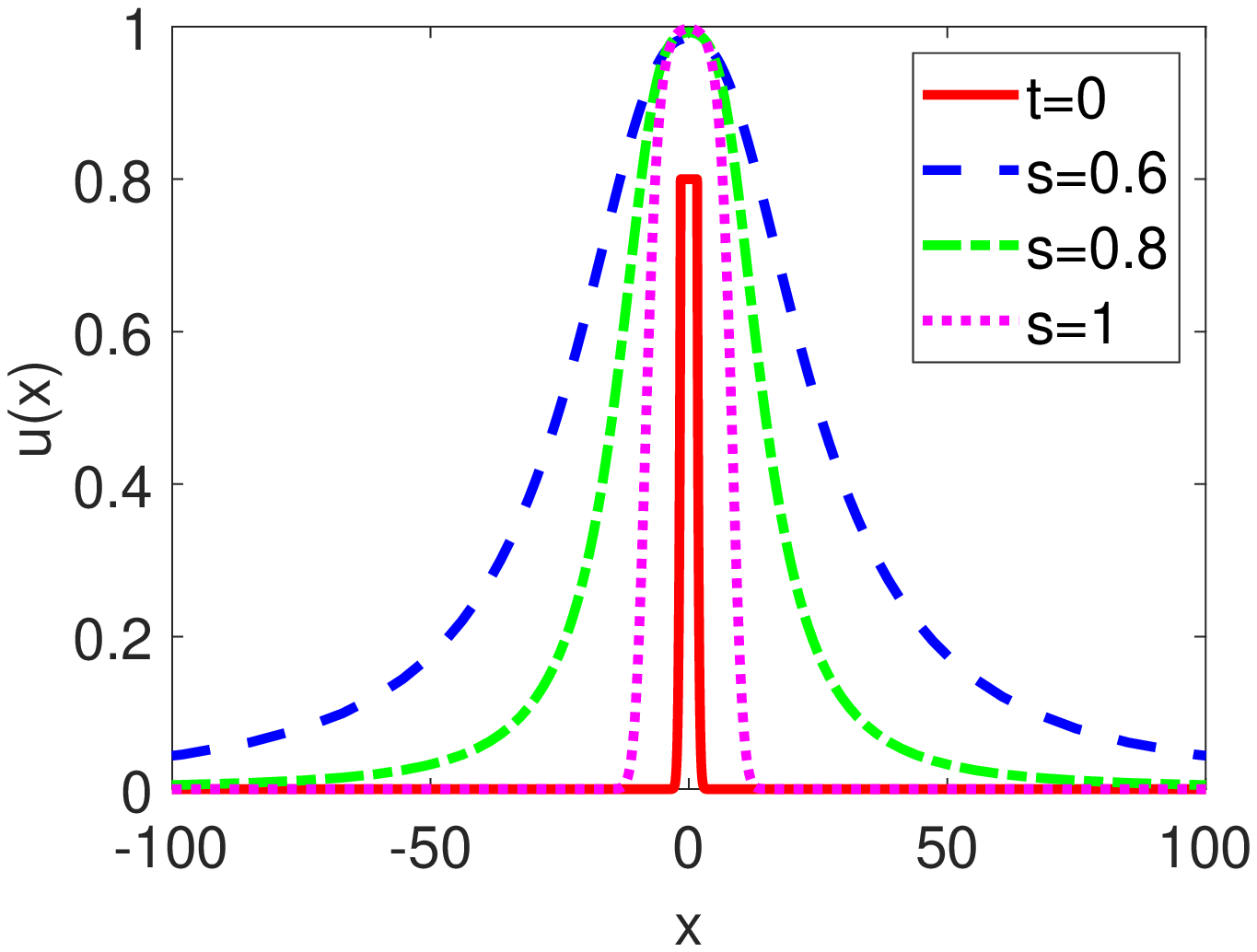}}
\subfigure[Asymptotic behavior for $|x|$ large]{
\includegraphics[width=0.45\textwidth]{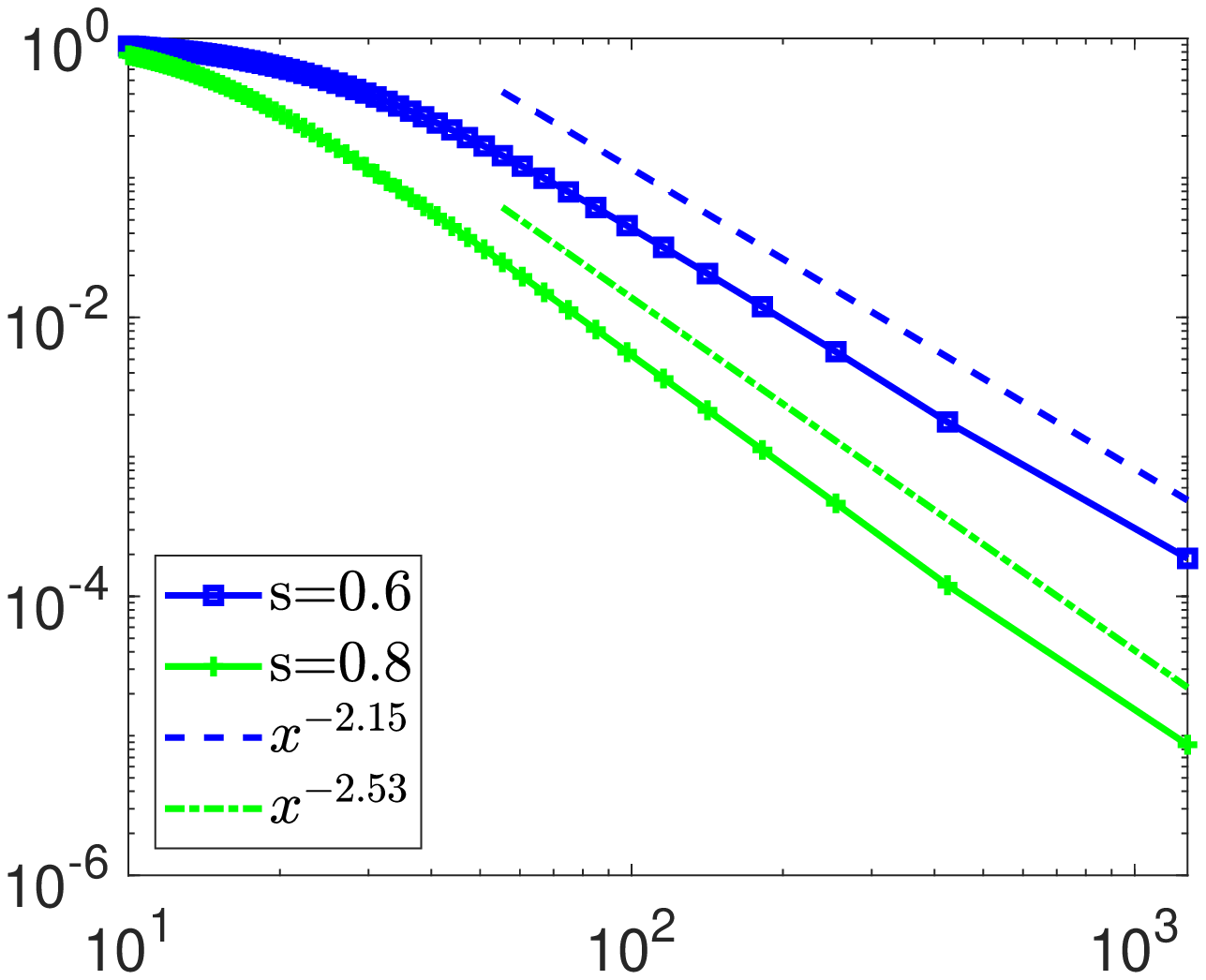}}
%\subfigure[$s\leq 0.5$]{
%\includegraphics[width=0.45\textwidth]{umaxssmallCN}}
%\subfigure[$s>0.5$]{
%\includegraphics[width=0.45\textwidth]{umaxsbigCN}}
\caption{Numerical results for Fisher-Kolmogorov equation in one-dimension.}
\label{FK1D}
\end{figure}

Numerical results for Fisher-Kolmogorov equation in two-dimensional space are given in Fig.\,\ref{FK2D}, where we give the contours of the numerical solution at final time $T=30$. Here the initial condition is chosen as
\[
u_{0}=\min\Big\{0.8,10\exp(-x^2-y^2)\Big\}.
\]
The other parameters are the same as in one-dimension. In the left figure of Fig.\,\ref{FK2D}, we present the numerical result for the fractional Laplace operator $(-\Delta)^{s}$, where $s=0.8$. We observe that the contours of the solution are circles since fractional Laplacian is a symmetric operator. We also present in the right figure of Fig.\,\ref{FK2D} the numerical solution for Riesz fractional derivatives in two directions, i.e., we study the following equation
\begin{equation}
\frac{\partial u}{\partial t}=D_{1}\frac{\partial^{2\alpha} u}{\partial|x|^{2\alpha}}+D_{2}\frac{\partial^{2\beta} u}{\partial|y|^{2\beta}}+f(u).
\end{equation}
In order to compare with the fractional Laplace operator, here we choose $D_1=D_2=D=0.1$ and $\alpha=0.75,\,\beta=0.85$. Asymmetry exhibits in the right figure of Fig.\,\ref{FK2D}, although symmetry can be observed along each axis. We also observe that the accelerating front spreads faster for a smaller fractional derivative order, in agreement with our results for the one-dimensional examples.
\begin{figure}[!h]
\centering
\subfigure[Fractional laplacian $s=0.8$]{
\includegraphics[width=0.45\textwidth]{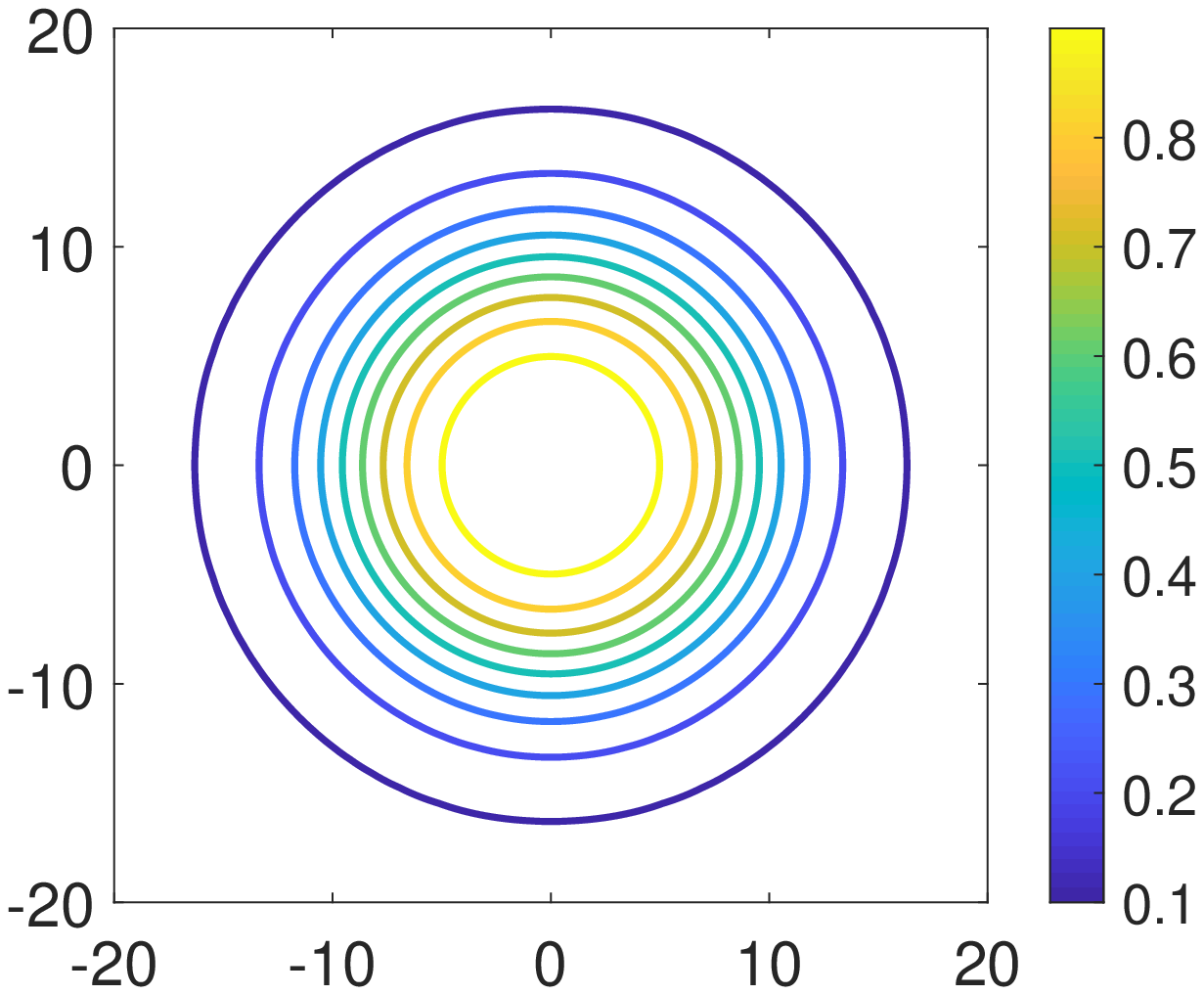}}
\subfigure[Riesz derivative $\alpha=0.75,\,\beta=0.85$]{
\includegraphics[width=0.45\textwidth]{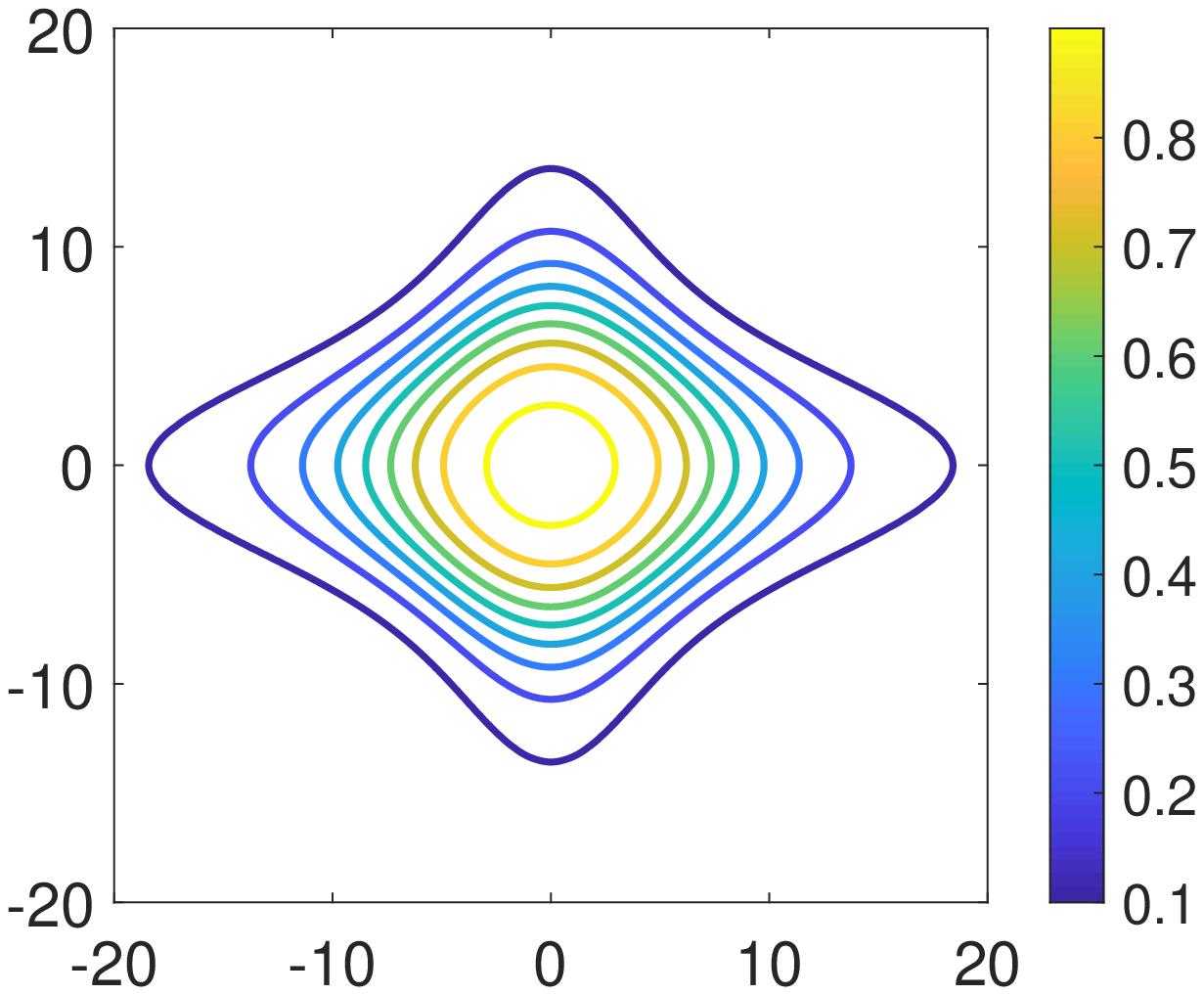}}
\caption{Numerical results for Fisher-Kolmogorov equation in two-dimension.}
\label{FK2D}
\end{figure}

\subsection{Allen-Cahn equation}\label{2dalen}
We consider in this example the spectral-Galerkin method for the fractional-in-space Allen-Cahn equation which is the classical phase field model. Here, the reaction term is given in \eqref{Fu}.

In our first numerical experiment, we set $N_{x}=N_{y}=500$, $\epsilon=0.02$, $\tau=0.1.$ With the random initial data
\begin{equation}
u_{0}=0.5+0.1(\text{rand}-0.5),
\end{equation}
 we show the numerical results in Fig.\,\ref{allen2drand} for different $s$. Although the solution domain is infinity, we display in Fig.\,\ref{allen2drand} the numerical results in $[-1,1]^2$. In this experiment, we are primarily concerned with the effect of $s$ on the speed of evolution. It is observed  from Fig.\,\ref{allen2drand} that the fractional Allen-Cahn models with larger values of $s$ produce a rapid movement to larger bulk regions. We also note that reducing the fractional derivative order $s$ leads to a thinner interfacial layer.

\begin{figure}[!h]
{\hspace*{-1.5cm}\includegraphics[width=0.9\textwidth]{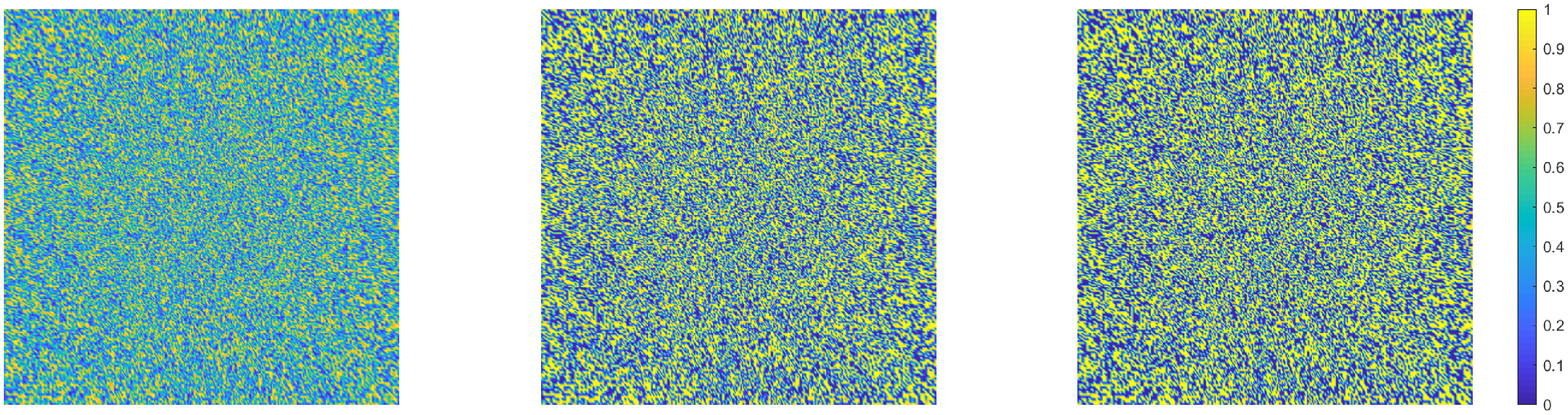}}
{\hspace*{-1.5cm}\includegraphics[width=0.9\textwidth]{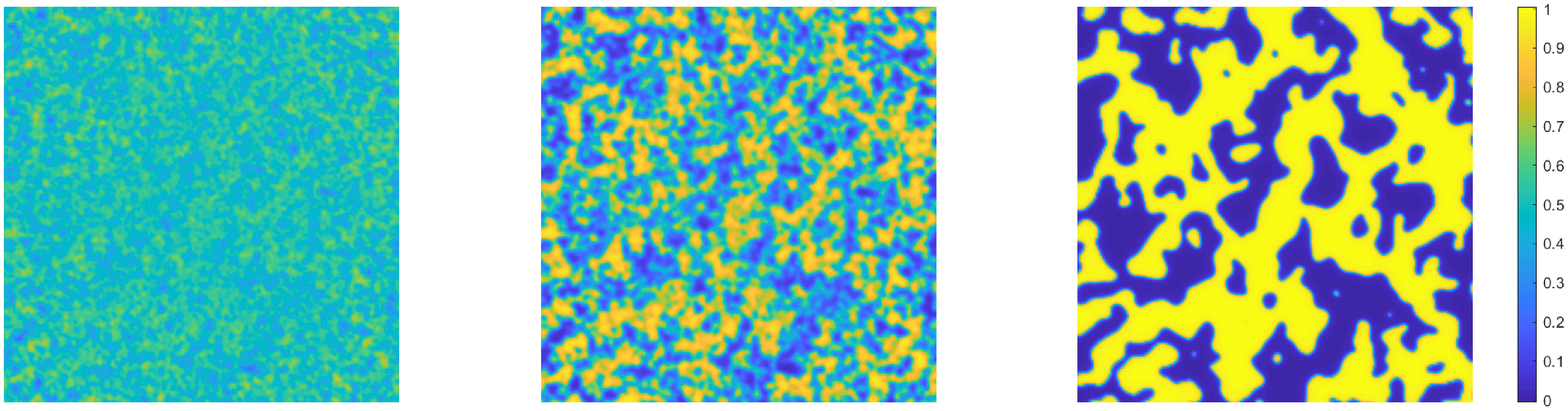}}
{\hspace*{-1.5cm}\includegraphics[width=0.9\textwidth]{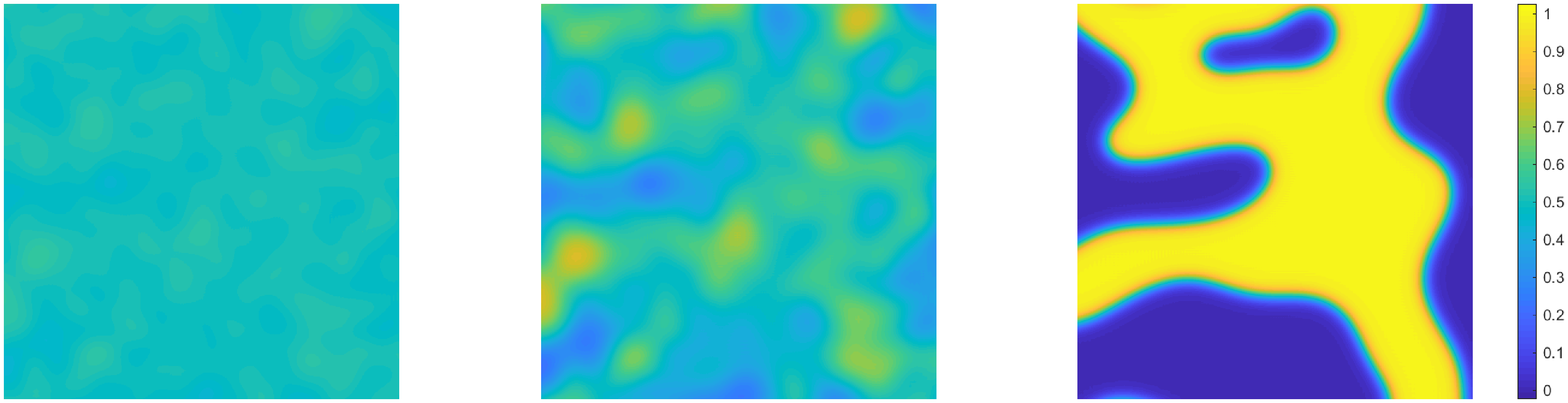}}
\caption
{\small  The snapshots of the numerical solution for space fractional Allen-Cahn equation. Top row: $s=0.3$, middle: $s=0.6$, bottom: $s=0.9$. From left to right: $t=10$, $20$, $100$.}\label{allen2drand}
\end{figure}

In next test for fractional Allen-Cahn equation, we study the evolution of mean curvature for
\begin{equation}\label{allen2dcurvature}
u_{t}=-(-\Delta)^s u(x)+\frac{f(u)}{\epsilon^2}.
\end{equation}
Here the initial condition is taken as
\begin{equation}\label{allen2dcurvatureinitial}
u_{0}=\frac{1}{2}\bigg(1-\tanh\Big(\frac{\sqrt{x^2+y^2}-0.6}{\sqrt{2}\epsilon}\Big)\bigg).
\end{equation}
We set $\epsilon=0.01$, $\tau=10^{-4}$ and $N_{x}=N_{y}=500$, numerical results are given in Fig.\,\ref{allen2dradius}. As time evolves, the radius when $u\geq 0.5$ is recorded for $s=0.8$. Results in Fig.\ref{allen2dradius} $(a)$ shows the evolution of the numerical solution with the initial value given in \eqref{allen2dcurvatureinitial} at $t=0,\,0.2,\,0.5,\,0.8$. The 0.5 level contour lines of $u(x,y,t)$ are plotted in $(b)$. Here contour of circles are observed since we use the fractional Laplace operator. At the same time, the circle shrinks with time. In $(c)$, quantitative change of radius$^2$, i.e., area, with respect to time for different $s$ are given. We know for $s=1$, the evolution of the mean curvature flow is radius\,=$\sqrt{R_{0}^2-2t}$. In our case, here $R_{0}=0.6$. Thus, singularity for $s=1$ happens at $t=0.18$, which is the disappearing time, as is also shown in our numerical result in figure $(c)$. In fact, linear decrease between area and time is observed in all cases, while the slope is different. When decreasing $s$, a smaller slope is obtained.
\begin{figure}[!th]
\centering
{\subfigure[Evolution of $u(x,y)$ for $s=0.8$ at $t=0,\,0.2,\,0.5$ and $0.8$]{
{\hspace*{-1cm}}\includegraphics[width=1.1\textwidth]{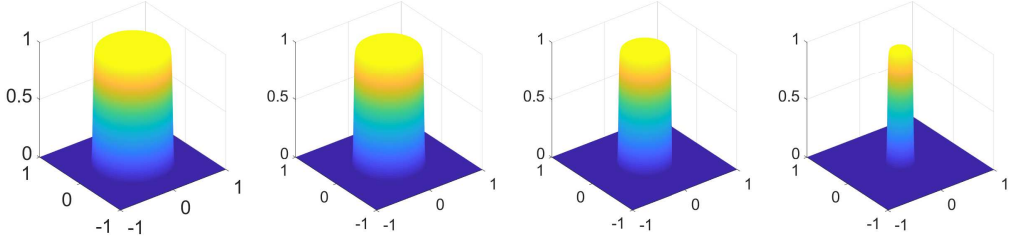}}}
{\subfigure[Contour of level $0.5$ for $s=0.8$ at different times]{
\includegraphics[width=0.48\textwidth]{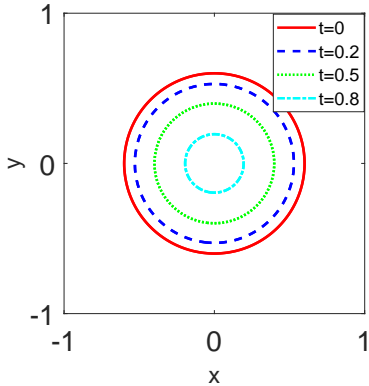}}
\subfigure[Radius$^{2}$ change with time for different $s$]{
\includegraphics[width=0.48\textwidth]{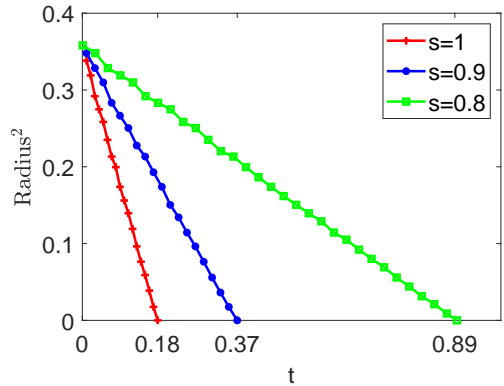}}}
\label{allen2dradius}
\caption{Numerical result of \eqref{allen2dcurvature} with a smooth initial value.}
\end{figure}

\subsection{Gray-Scott equation}
The space-fractional Gray-Scott equation is given by
\begin{equation}\label{gs}
\left\{\begin{array}{ll}
  \frac{\partial U}{\partial t}=-K_{u}(-\Delta)^{\alpha/2}U(x)-UV^2+F(1-U),\vspace{1.2ex}\\
  \frac{\partial V}{\partial t}=-K_{v}(-\Delta)^{\beta/2}V(x)+UV^2-(F+\kappa)V,
  \end{array}\right.
\end{equation}
where $K_{u},\,K_{v}$ are the two diffusion coefficients and $F,\,\kappa$ are the dimensionless positive constants. This model corresponds to the following two reactions
\begin{equation}
\begin{split}
U+2V&\to 3V,\\
V&\to P,
\end{split}
\end{equation}
where $U,\,V$ and $P$ represent different chemical species. During the two reactions, $U$ is continuously supplied at a given feed rate $F$ and $V$ is removed at a given kill rate $F+\kappa$. It can be easily obtained that $(U^{*},V^{*})=(1,0)$ is a trivial stable point for this problem. In order to satisfy the homogeneous boundary conditions as $|x|\to\infty$, we need to modify the reaction terms in \eqref{gs} a little bit. By letting $u=1-U$, $v=V$, we have for the first equation in \eqref{gs} that
\begin{equation}
  -\frac{\partial u}{\partial t}=K_{u}(-\Delta)^{\alpha/2}u(x)-(1-u)v^2+Fu,
\end{equation}
i.e.,
\begin{equation}
  \frac{\partial u}{\partial t}=-K_{u}(-\Delta)^{\alpha/2}u(x)+(1-u)v^2-Fu,
\end{equation}
Consequently, the Gray-Scott equation \eqref{gs} is turned to
\begin{equation}\label{gs1}
\left\{\begin{array}{ll}
  \frac{\partial u}{\partial t}=-K_{u}(-\Delta)^{\alpha/2}u(x)+(1-u)v^2-Fu,\vspace{1.2ex}\\
  \frac{\partial v}{\partial t}=-K_{v}(-\Delta)^{\beta/2}v(x)+(1-u)v^2-(F+\kappa)v.
  \end{array}\right.
\end{equation}
Now $(u^{*},v^{*})=(0,0)$ is a stable point of \eqref{gs1}. In our example, we choose the following initial condition
\begin{equation}
(u_{0},v_{0})=
\left\{\begin{array}{ll}
  (0.5,0.25),&\quad x^2+y^2<1\\
  (0,0),&\quad \text{else},
  \end{array}\right.
\end{equation}
as a permutation of the trivial stable state. Other parameters are $K_{u}=2\times 10^{-5}$, $K_{v}=K_{u}/2$, $F=0.03$, $N=800$, $\tau=0.1$, while we vary $s$ and $\kappa$. Here the ratio between diffusion coefficients $K_u/K_v>1$ is chosen since the model is known to generate different mechanisms of pattern formation depending on the values of feed rate $F$ and kill rate $\kappa$. The evolutions of numerical approximations for $u$ with different $s$ and $\kappa$ are given in Fig. \ref{gsnum55} and \ref{gsnum61}. The domain of interest is taken as $[-3,3]^2$ in all figures in order to display the propagation with time. Fig.\,\ref{gsnum55} shows that the permutations propagate outward for all values of $s$ and a reduction in the fractional order leads to a decrease in the velocity of the propagation. By comparison with Fig.\,\ref{gsnum61} for same $s$, we observe that the values of $u$ concentrate on a smaller value with increasing kill rate $\kappa$, and when $\kappa=0.061$, numerical approximations of $s=1$ even go to the homogenous steady sate.
\begin{figure}[!htbp]
\subfigure{
\centering
{\hspace*{-1.2cm}}{\includegraphics[width=1\textwidth]{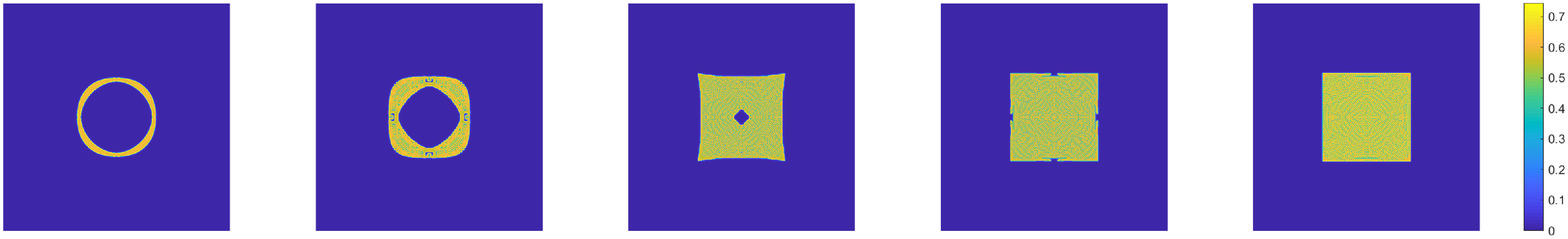}}}
\vskip -5pt
\subfigure{
\centering
{\hspace*{-1.2cm}}{\includegraphics[width=1\textwidth]{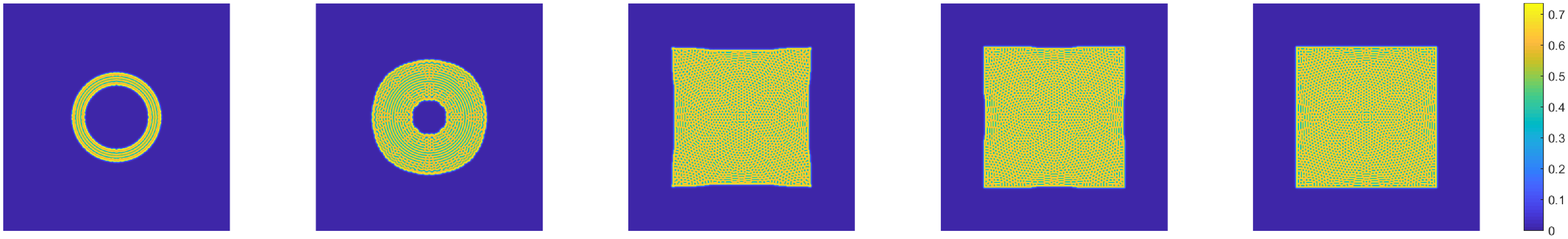}}}
\vskip -5pt
\subfigure{
\centering
{\hspace*{-1.2cm}}{\includegraphics[width=1\textwidth]{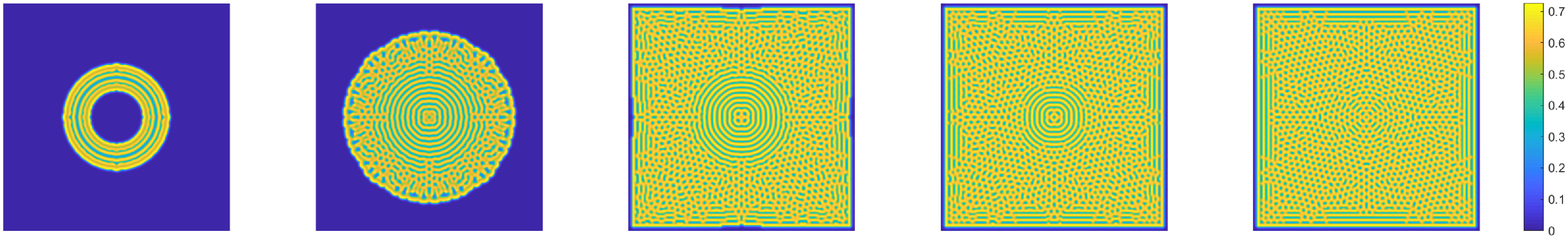}}}
\vskip -5pt
\caption{Numerical approximations of Gray-Scott equation with $\kappa=0.055$ with different values of $s$. Top: $s=0.75$, middle: $s=0.85$, bottom: $s=1$. Time from left to right: $t=1000,3000,9000,15000,30000$.}
\label{gsnum55}
\end{figure}

\begin{figure}[!htbp]
\subfigure{
\centering
{\hspace*{-1.2cm}}{\includegraphics[width=1\textwidth]{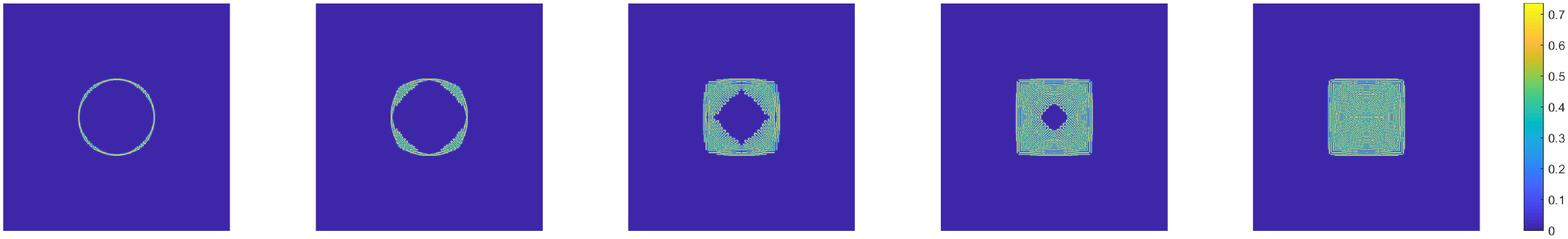}}}
\vskip -5pt
\subfigure{
\centering
{\hspace*{-1.2cm}}{\includegraphics[width=1\textwidth]{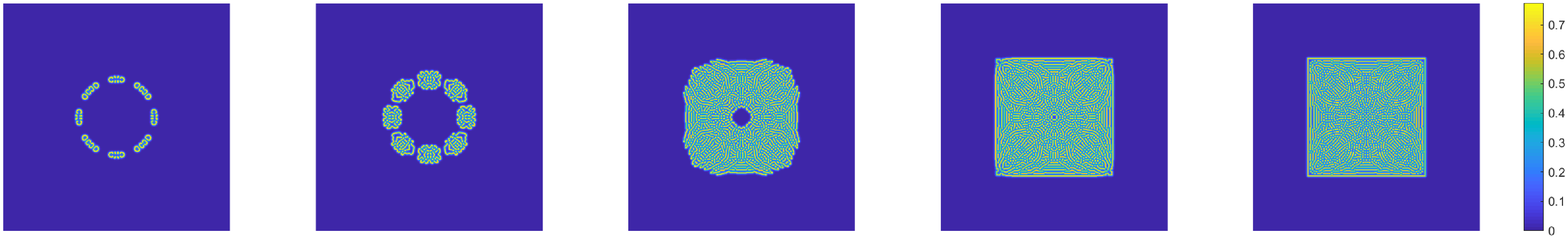}}}
\vskip -5pt
\subfigure{
\centering
{\hspace*{-1.2cm}}{\includegraphics[width=1\textwidth]{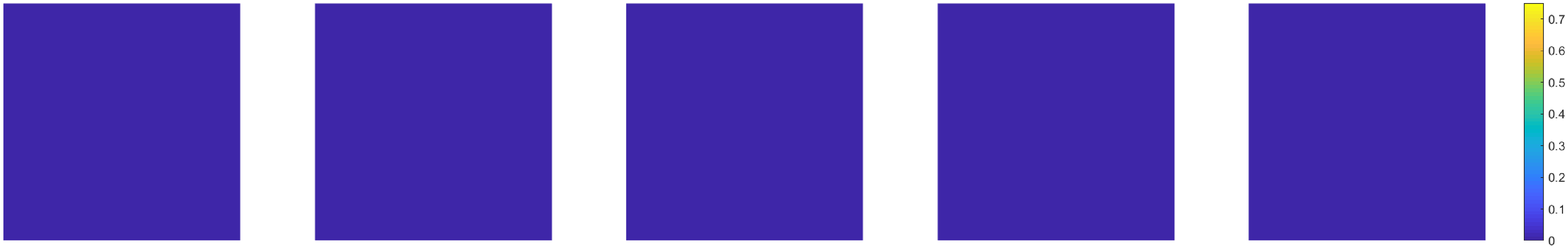}}}
\vskip -5pt
\caption{Numerical approximations of Gray-Scott equation with $\kappa=0.061$ with different values of $s$. Top: $s=0.75$, middle: $s=0.85$, bottom: $s=1$. Time from left to right: $t=1000,3000,9000,15000,30000$. We remark that the third-row figure with $s=1$ represents the homogeneous steady state and is displayed with the same colorbar as the first- and second-row figures for comparison.}
\label{gsnum61}
\end{figure}

For easy comparison of the pattern formation at final time $T=30000$, we present in Fig.\ref{gspattern} with the domain chosen as $[0,1]^2$. We observe from Fig.\,\ref{gspattern} that different patterns are developed for different combinations of $\kappa$ and $s$. For $\kappa=0.55,\,s=0.85$, almost only spot patterns are observed, while a mixed pattern of stripes and spots appears for $\kappa=0.61,\,s=0.85$. And in the case of $\kappa=0.61,\,s=0.75$, more thin and long stripes emerge.

\begin{figure}[!h]
\centering
\subfigure[$\kappa=0.55,\,s=0.85$]{
\includegraphics[width=0.31\textwidth]{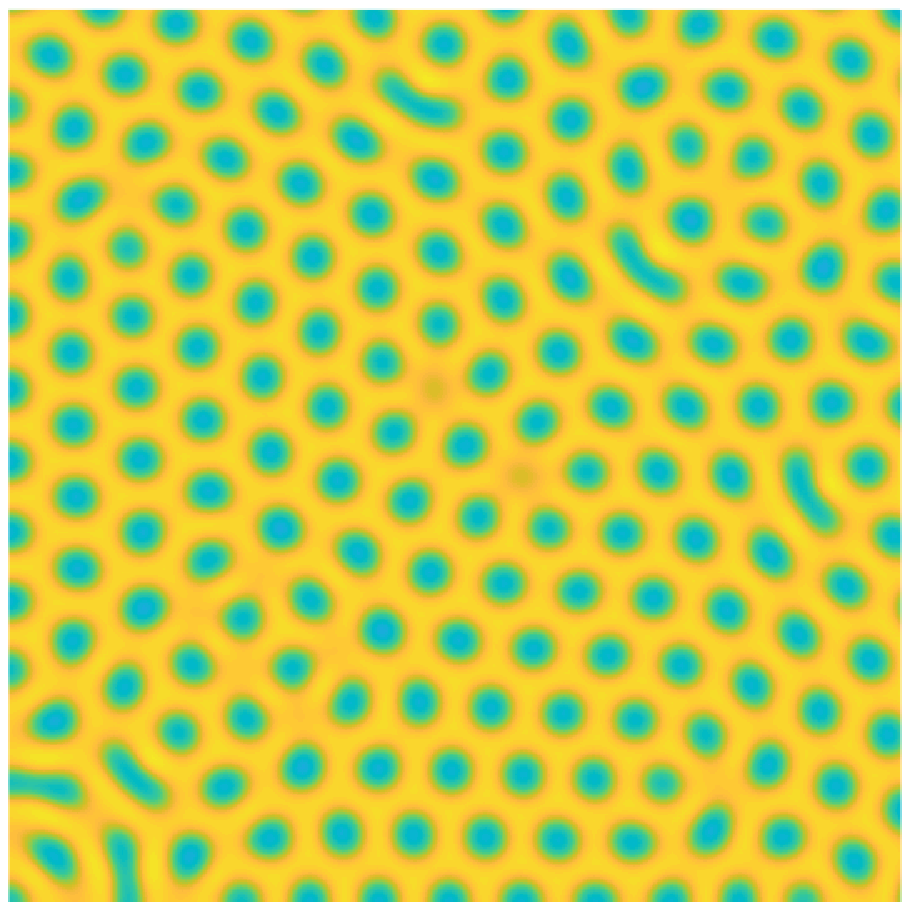}}
\subfigure[$\kappa=0.61,\,s=0.85$]{
\includegraphics[width=0.31\textwidth]{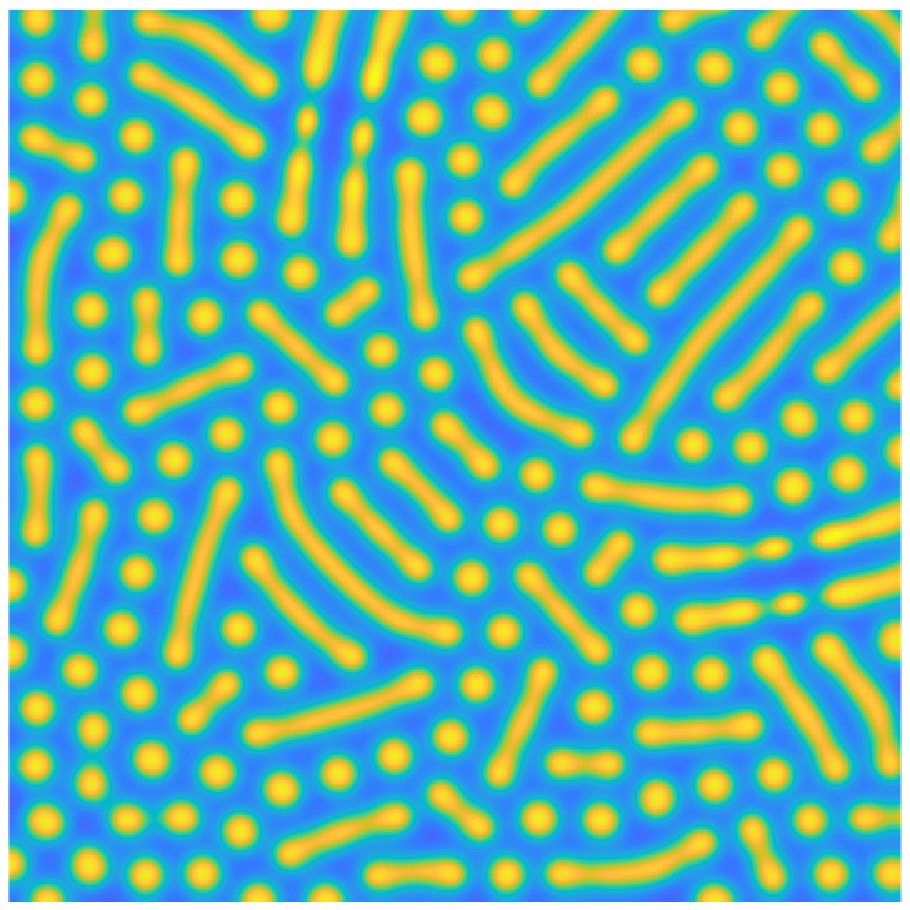}}
\subfigure[$\kappa=0.61,\,s=0.75$]{
\includegraphics[width=0.31\textwidth]{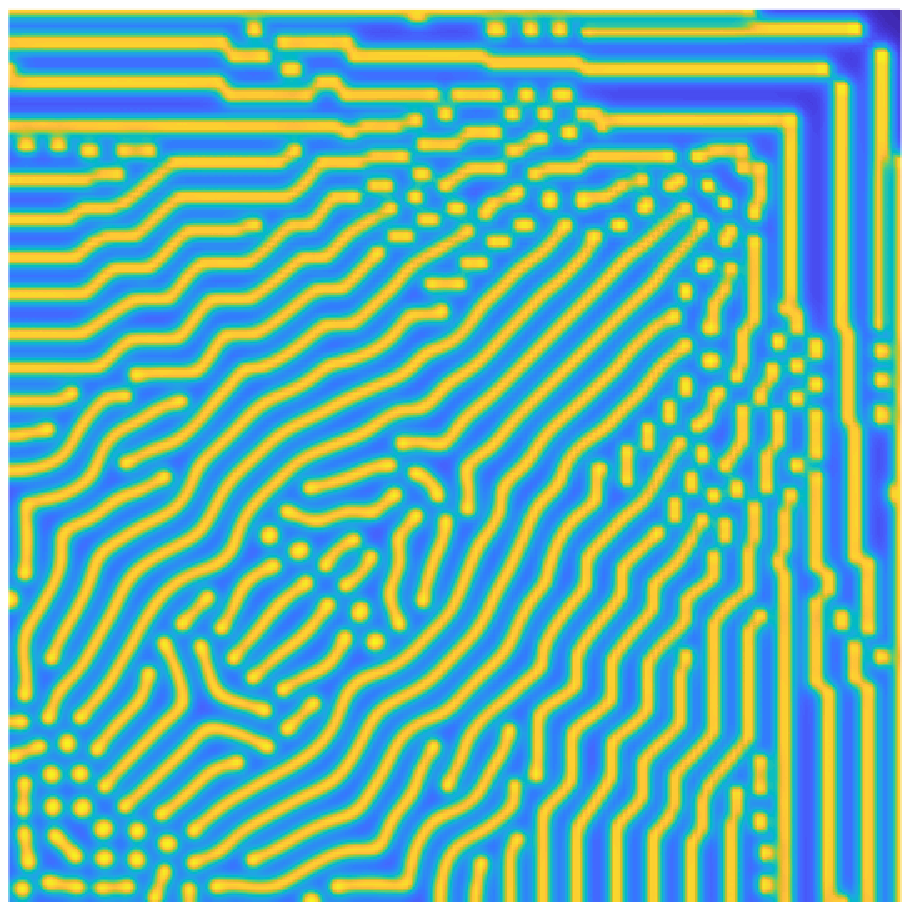}}
\caption{Pattern formation of the numerical approximation at final time $T=30000$ for different combinations of $\kappa$ and $s$.}
\label{gspattern}
\end{figure}
\subsection{FitzHugh-Nagumo equation}
The FitzHugh-Nagumo equation represents one of the simplest models for studying excited media. We study the following space-fractional equation:
\begin{equation}
\left\{\begin{array}{ll}
  \frac{\partial u}{\partial t}=-K_{u}(-\Delta)^{\alpha/2}u(x)+u(1-u)(u-\mu)-v,\vspace{1.2ex}\\
  \frac{\partial v}{\partial t}=\epsilon(\beta u-\gamma v-\delta),
  \end{array}\right.
\end{equation}
where $u$ is the excitation variable and $v$ the recovery variable. The FitzHugh-Nagumo equations have been used to qualitatively model many biological phenomena. It is known that when $\delta=0$, $(u^{*},v^{*})=(0,0)$ is a stable point for this problem. For our numerical experiments, we choose the initial condition
\begin{equation}
u_{0}=
\left\{\begin{array}{ll}
  1,&\quad (x,y)\in (-1,0)\times (-1,0),\\
  0,&\quad \text{elsewhere},
  \end{array}\right.
\end{equation}
\begin{equation}
v_{0}=
\left\{\begin{array}{ll}
  0.1,&\quad (x,y)\in (-1,1)\times (0,1),\\
  0,&\quad \text{elsewhere},
  \end{array}\right.
\end{equation}
which allows the initial condition to rotate clockwise and generate spiral waves. Other parameters are taken as $\mu=0.1$, $\epsilon=0.01$, $\beta=0.5$, $\gamma=1$, $\delta=0$ for the same purpose. Here we choose $N=800$, $\tau=0.1$ for numerical computation.

Numerical results for different combinations of $s$ and $K_{u}$ are displayed in Figs. \ref{fhnnum14}-\ref{fhnnum15}. In order to display the results more clearly, we use in Figs. \ref{fhnnum14} and \ref{fhnnum55} a bigger domain $[-5, 5]^2$, while for smaller $K_{u}$ case as in Fig. \ref{fhnnum15} a smaller window of $[-3, 3]^2$ is taken. By comparison within each figure, we can observe that the width of excitation wavefront is reduced for smaller values of $s$. By comparison among Fig.\ref{fhnnum14}, \ref{fhnnum55} and \ref{fhnnum15} for same values of $s$, it is obvious that in unbounded domains, the affected area is decreased with smaller diffusion coefficient $K_{u}$.

 \begin{figure}[!htbp]
\subfigure{
\centering
{\hspace*{-1.2cm}}{\includegraphics[width=1\textwidth]{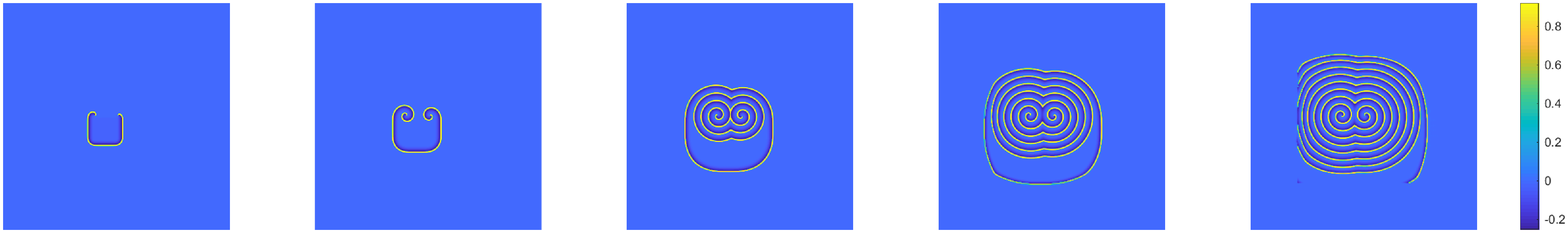}}}
\vskip -5pt
\subfigure{
\centering
{\hspace*{-1.2cm}}{\includegraphics[width=1\textwidth]{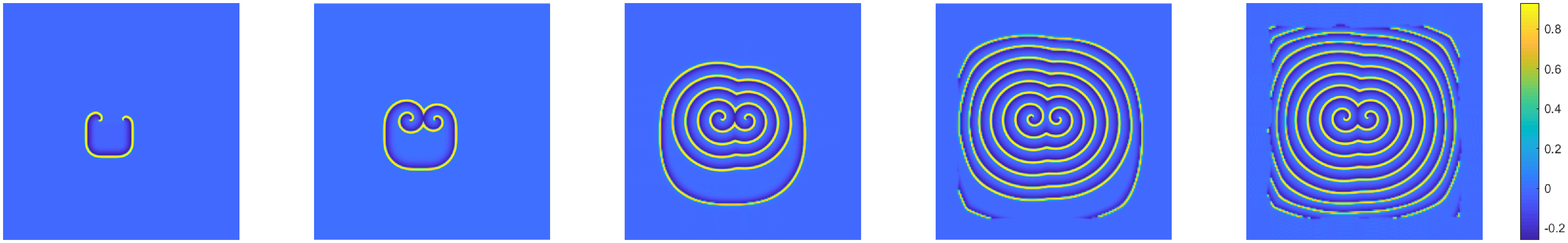}}}
\vskip -5pt
\subfigure{
{\hspace*{-1.2cm}}{\includegraphics[width=1\textwidth]{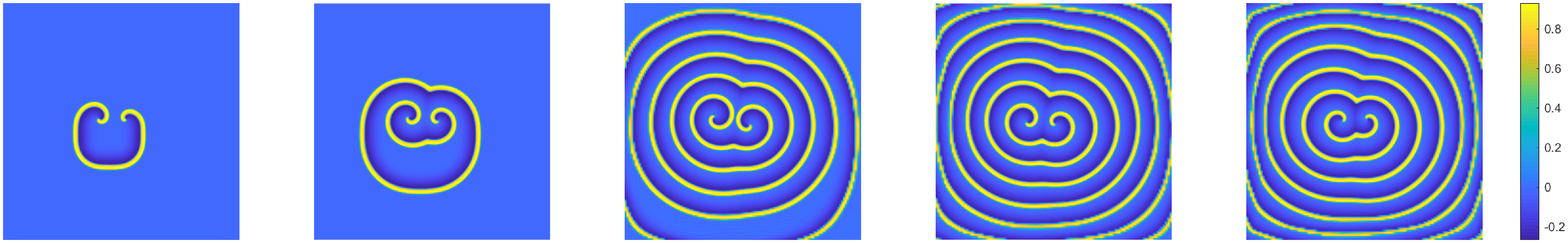}}}
\caption{Numerical approximations of FitzHugh-Nagumo equation with $K_{u}=1\times 10^{-4}$ with different values of $s$. Top: $s=0.75$, middle: $s=0.85$, bottom: $s=1$. Time from left to right: $t=200,400,1000,1500,2000$.}\label{fhnnum14}
\end{figure}

\begin{figure}[!htbp]
\subfigure{
\centering
{\hspace*{-1.2cm}}{\includegraphics[width=1\textwidth]{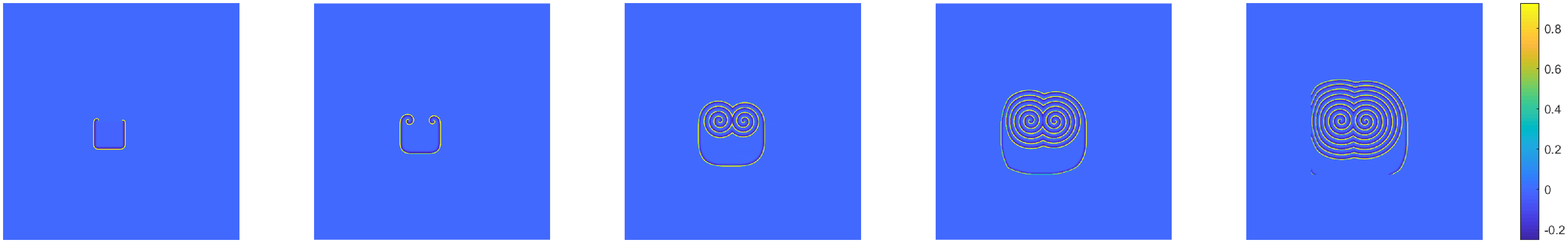}}}
\vskip -5pt
\subfigure{
\centering
{\hspace*{-1.2cm}}{\includegraphics[width=1\textwidth]{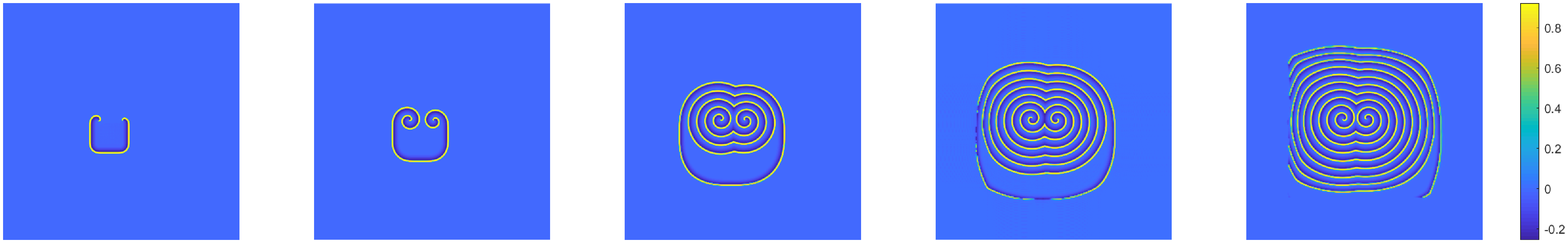}}}
\vskip -5pt
\subfigure{
{\hspace*{-1.2cm}}{\includegraphics[width=1\textwidth]{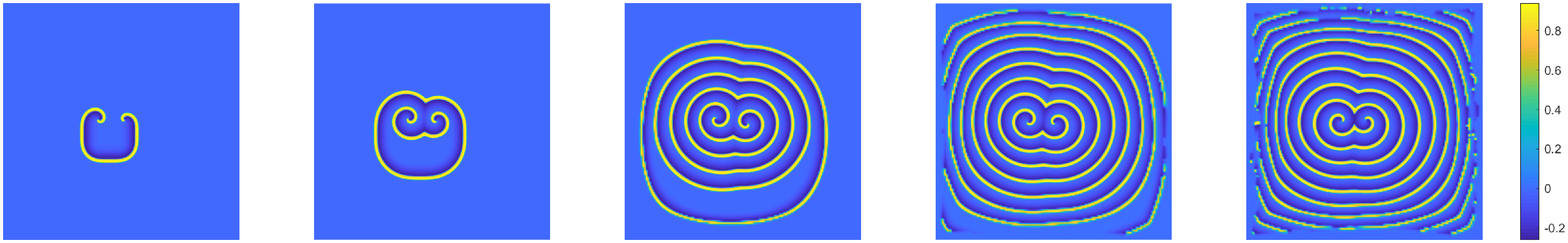}}}
\caption{Numerical approximations of FitzHugh-Nagumo equation with $K_{u}=5\times 10^{-5}$ with different values of $s$. Top: $s=0.75$, middle: $s=0.85$, bottom: $s=1$. Time from left to right: $t=200,400,1000,1500,2000$.}\label{fhnnum55}
\end{figure}

 \begin{figure}[!htbp]
\subfigure{
\centering
{\hspace*{-1.2cm}}{\includegraphics[width=1\textwidth]{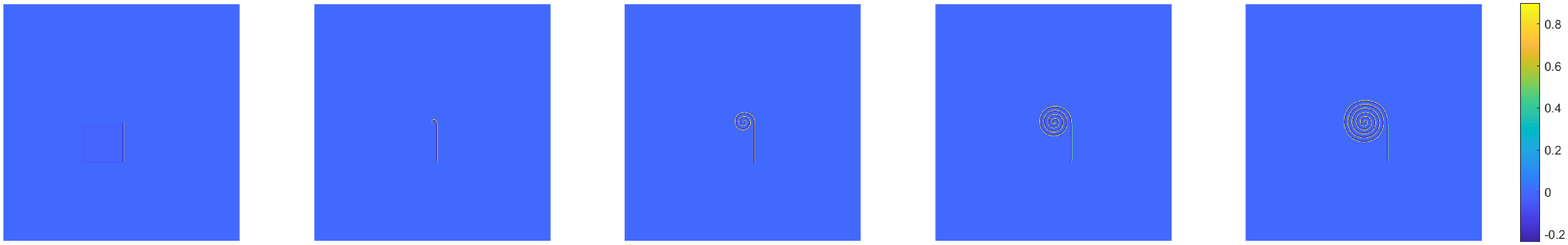}}}
\vskip -5pt
\subfigure{
\centering
{\hspace*{-1.2cm}}{\includegraphics[width=1\textwidth]{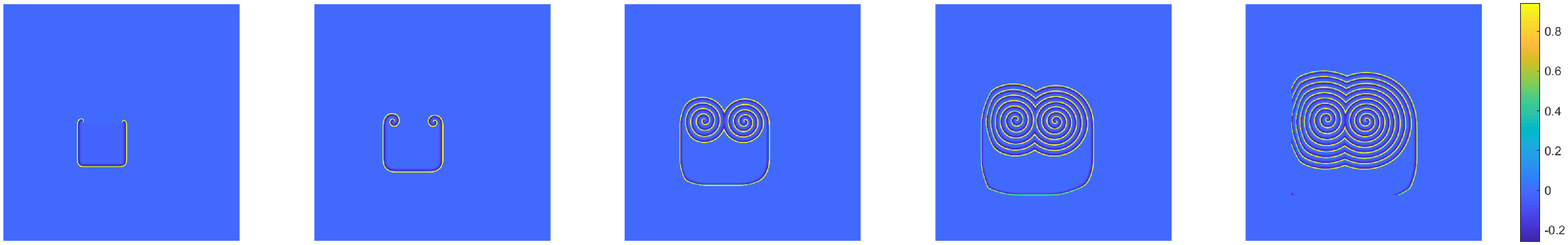}}}
\vskip -5pt
\subfigure{
{\hspace*{-1.2cm}}{\includegraphics[width=1\textwidth]{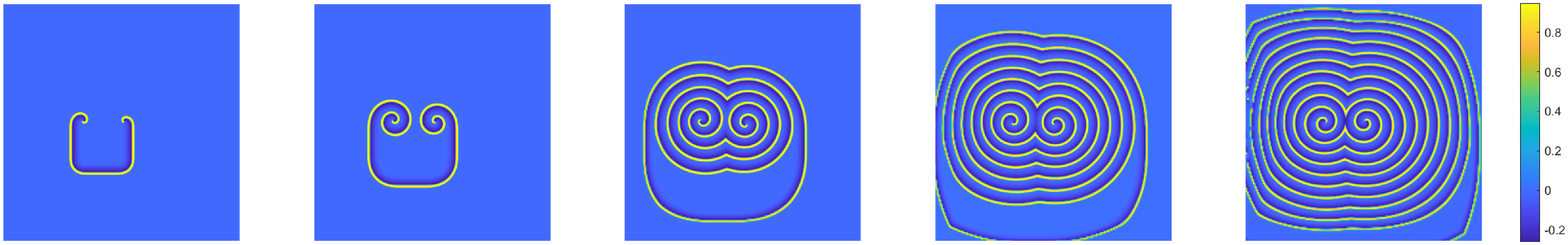}}}
\caption{Numerical approximations of FitzHugh-Nagumo equation with $K_{u}=1\times 10^{-5}$ with different values of $s$. Top: $s=0.75$, middle: $s=0.85$, bottom: $s=1$. Time from left to right: $t=200,400,1000,1500,2000$.}\label{fhnnum15}
\end{figure}

\subsection{Allen-Cahn equation in three-dimensional space}
We consider the spectral-Galerkin method for the fractional-in-space Allen-Cahn equation in three spatial dimensions. Here, the reaction term is given in \eqref{Fu}. In our numerical experiment, we set $N_{x}=N_{y}=N_{z}=200$, other parameters are the same as the example for 2D problem in subsection \ref{2dalen}. With the random initial data
\begin{equation}
u_{0}=0.5+0.1(\text{rand}-0.5),
\end{equation}
 we show the numerical results for isosurfaces of $u(x,y,z)=0.5$ at final time $T=100$ in Fig.\,\ref{allen3drand} with different $s$. Although the solution domain is infinity, we display in Fig.\,\ref{allen3drand} the numerical results in $[-1,1]^3$. It is observed from Fig.\,\ref{allen3drand} that the chaotic structures are transformed to large bulk regions as time evolves. Meanwhile, fractional Allen-Cahn models with larger values of $s$ produce more visible spatial structures, similar to the results for 2D.

\begin{figure}[!htbp]
\centering
\subfigure[$s=0.3$]{
\includegraphics[width=0.48\textwidth]{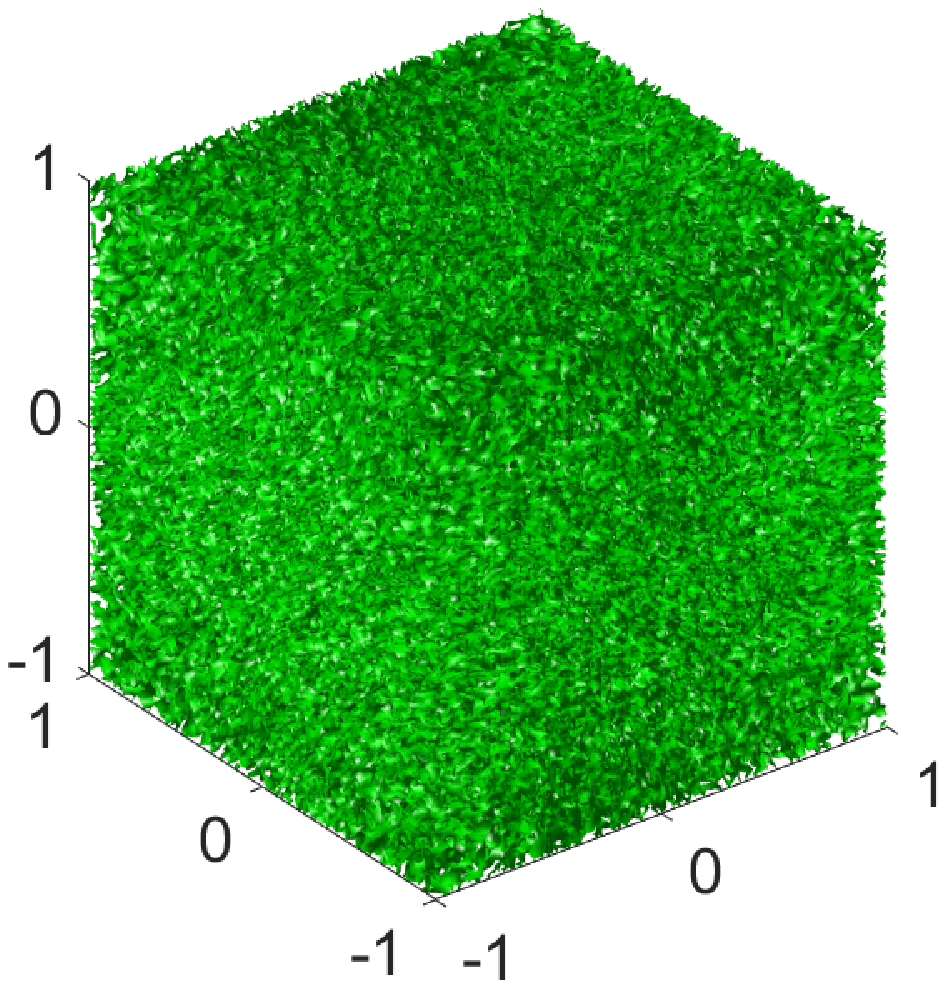}}
\subfigure[$s=0.5$]{
\includegraphics[width=0.48\textwidth]{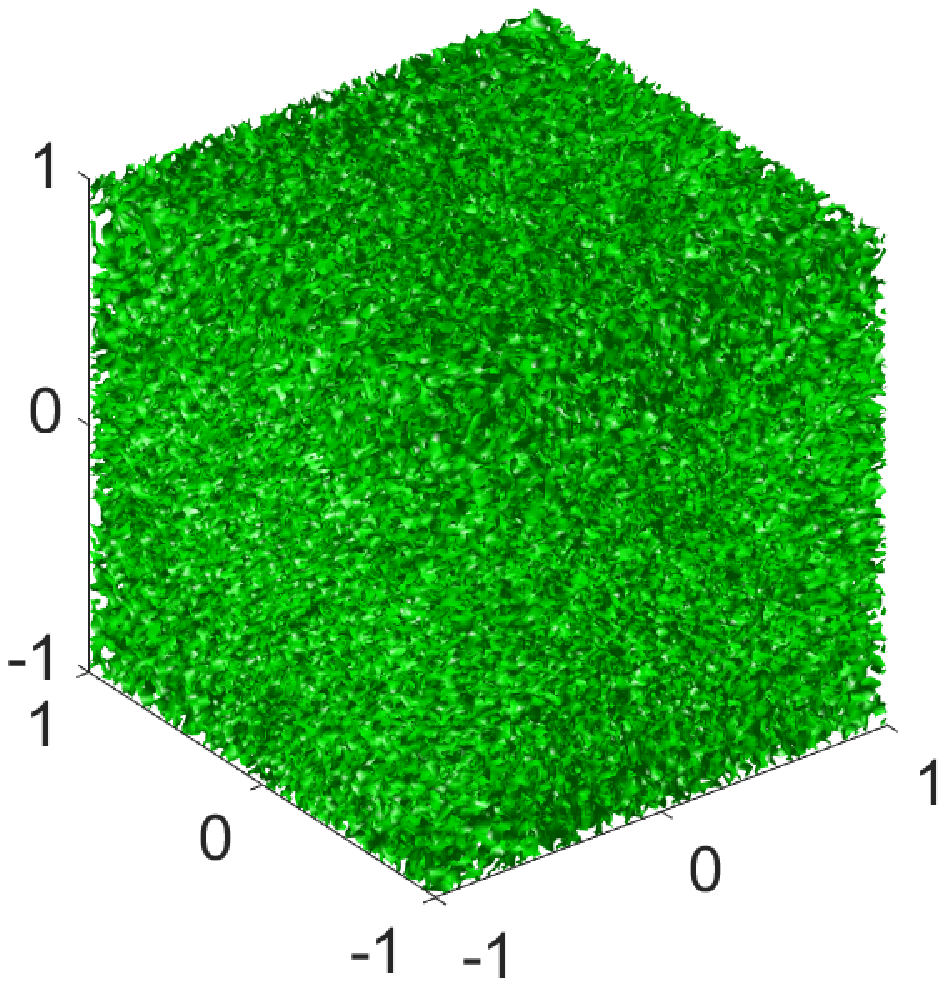}}
\subfigure[$s=0.6$.]{
\includegraphics[width=0.48\textwidth]{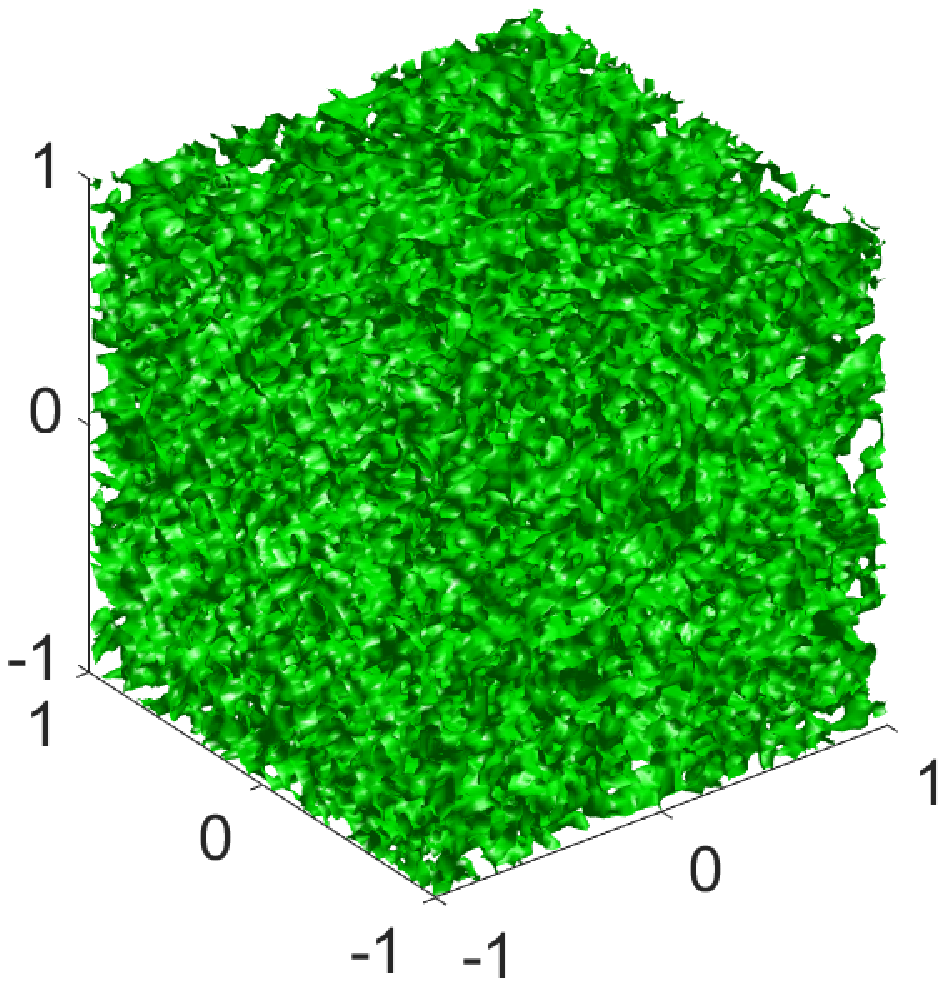}}
\subfigure[$s=0.8$.]{
\includegraphics[width=0.48\textwidth]{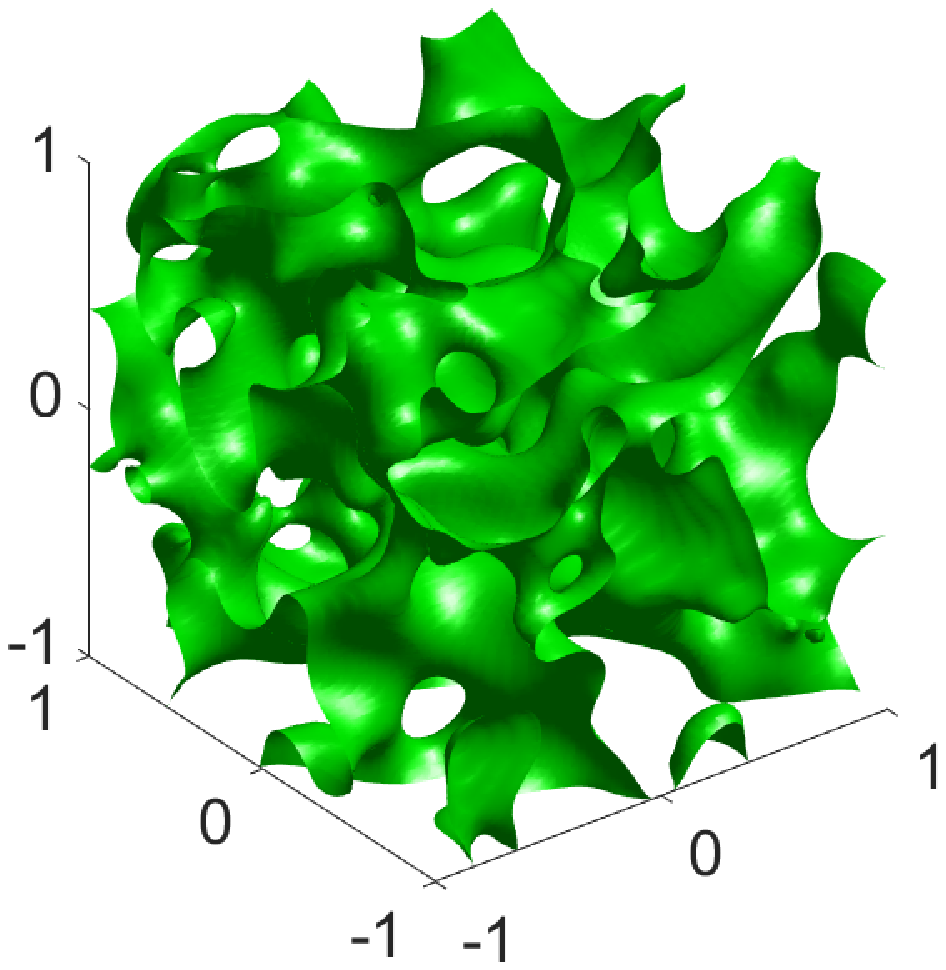}}
\subfigure[$s=1$]{
\includegraphics[width=0.48\textwidth]{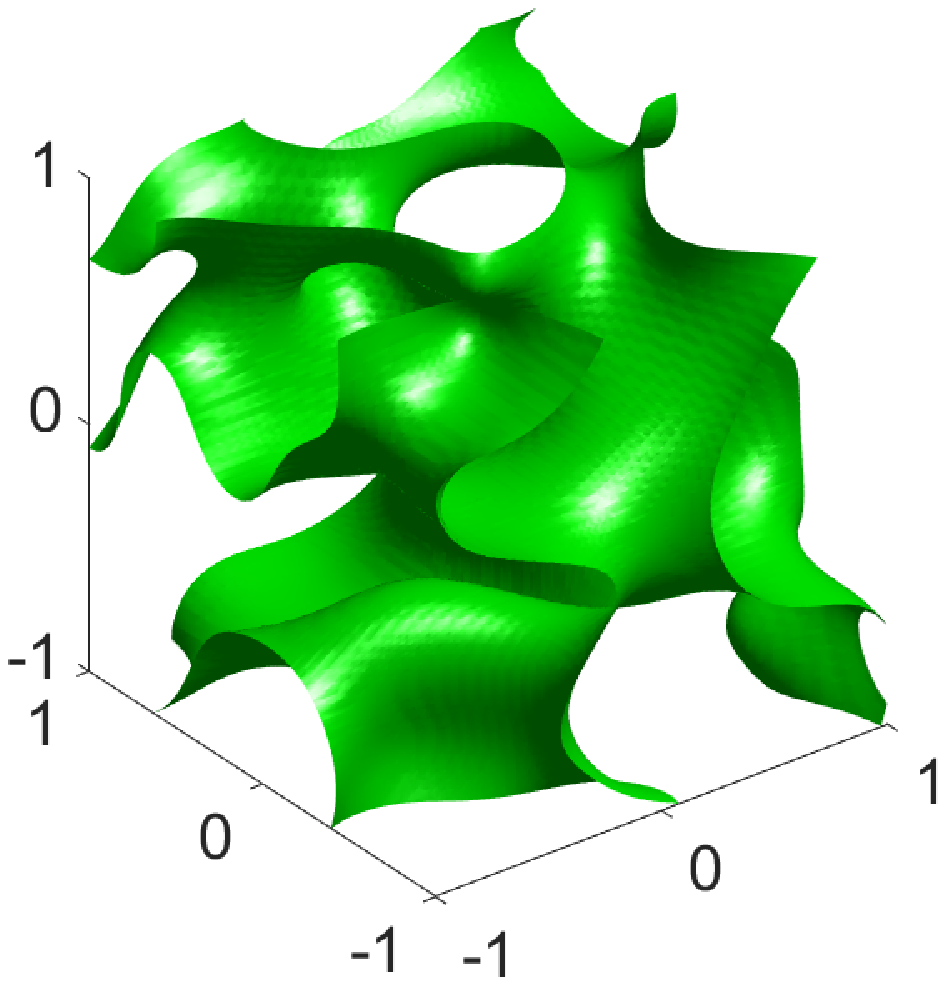}}
\subfigure[$s=1.2$.]{
\includegraphics[width=0.48\textwidth]{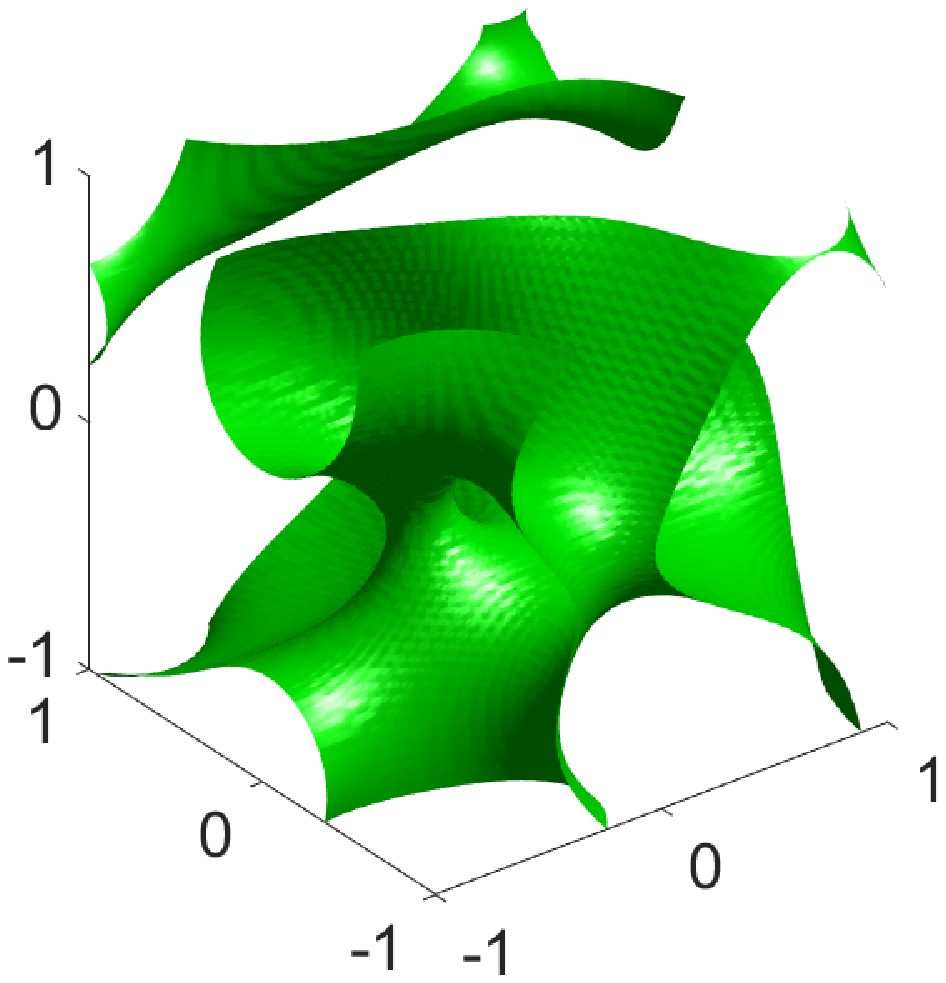}}
\caption{Isosurfaces of space-fractional Allen-Cahn equation in three-dimension where $u(x,y,z)=0.5$ associated with different $s$ at final time $T=100$.}
\label{allen3drand}
\end{figure}

%\begin{figure}[!htbp]
%\centering
%\subfigure[$s=0.1$]{
%\includegraphics[width=0.31\textwidth]{allen3d01}}
%\subfigure[$s=0.3$]{
%\includegraphics[width=0.31\textwidth]{allen3d03}}
%\subfigure[$s=0.5$]{
%\includegraphics[width=0.31\textwidth]{allen3d05}}
%\subfigure[$s=0.6$]{
%\includegraphics[width=0.31\textwidth]{allen3d06}}
%\subfigure[$s=0.8$.]{
%\includegraphics[width=0.31\textwidth]{allen3d08}}
%\subfigure[$s=1$]{
%\includegraphics[width=0.31\textwidth]{allen3d10}}
%\caption{Isosurfaces of space-fractional Allen-Cahn equation in three-dimension where $u(x,y,z)=0.5$ associated with different $s$ at final time $T=100$.}
%\label{allen3drand}
%\end{figure}
\section{Concluding remarks}

In this work, we applied the efficient and accurate Fourier-like method for fractional reaction-diffusion equations in unbounded domains as proposed in \cite{sheng2020fast}. The method is particularly effective for problems involving the fractional Laplacian. To fully discretize the underlying systems, we propose to use an accurate time marching scheme based on ETDRK4. The main advantage of our method is that it yields a fully diagonal representation of the fractional Laplace operator, which provides great efficiency. Numerical examples are presented to illustrate the effectiveness of the proposed method. We also remark that this method can also be used to deal with problems involving Riesz derivatives in different directions and coupled systems of arbitrary $n$ species.

There are still several possible extensions following the present method, e.g., space- and time-fractional problems involving the fractional Laplacian issues require future investigations. Other types of
fractional derivatives such as  Riemann-Liouville fractional derivatives and Caputo fractional derivatives will be the topic of our forthcoming research.

\section{Acknowledgement}
The work of the author is partially supported by
the NSF of China (under the Grant No. 11731006).

\bibliographystyle{siam}
\bibliography{ref,rational}
\end{document}